\newcommand{\nc}{\newcommand}
\DeclarePairedDelimiter\abs{\lvert}{\rvert}
\nc{\kk}{{\mathsf{k}}}
\nc{\PP}{{\mathbb{P}}}
\nc{\QQ}{{\mathbb{Q}}}
\nc{\RR}{{\mathbb{R}}}
\nc{\ZZ}{{\mathbb{Z}}}
\nc{\CA}{{\mathcal{A}}}
\nc{\CB}{{\mathcal{B}}}
\nc{\CC}{{\mathcal{C}}}
\nc{\D}{{\mathcal{D}}}
\nc{\CE}{{\mathcal{E}}}
\nc{\CF}{{\mathcal{F}}}
\nc{\CG}{{\mathcal{G}}}
\nc{\CH}{{\mathcal{H}}}
\nc{\CL}{{\mathcal{L}}}
\nc{\CM}{{\mathcal{M}}}
\nc{\CN}{{\mathcal{N}}}
\nc{\CO}{{\mathcal{O}}}
\nc{\CQ}{{\mathcal{Q}}}
\nc{\CR}{{\mathcal{R}}}
\nc{\CS}{{\mathcal{S}}}
\nc{\CT}{{\mathcal{T}}}
\nc{\CU}{{\mathcal{U}}}
\nc{\CV}{{\mathcal{V}}}
\nc{\CW}{{\mathcal{W}}}
\nc{\CX}{{\mathcal{X}}}
\nc{\CY}{{\mathcal{Y}}}
\nc{\eps}{\varepsilon}
\nc{\lotimes}{\mathbin{\mathop{\otimes}\limits^{\mathbb{L}}}}
\nc{\CEnd}{\mathop{\mathcal{E}\mathit{nd}}\nolimits}
\nc{\CExt}{\mathop{\mathcal{E}\mathit{xt}}\nolimits}
\nc{\CHom}{\mathop{\mathcal{H}\mathit{om}}\nolimits}
\nc{\RGamma}{\mathop{{\mathsf{R}}\Gamma}\nolimits}
\nc{\RHom}{\mathop{\mathsf{RHom}}\nolimits}
\nc{\RCHom}{\mathop{\mathsf{R}\mathcal{H}\mathit{om}}\nolimits}
\nc{\RG}{\mathop{\mathsf{R\Gamma}}\nolimits}
\nc{\Hom}{\mathop{\mathsf{Hom}}\nolimits}
\nc{\Ext}{\mathop{\mathsf{Ext}}\nolimits}
\nc{\End}{\mathop{\mathsf{End}}\nolimits}
\nc{\Tor}{\mathop{\mathsf{Tor}}\nolimits}
\nc{\Tordim}{\mathop{\mathsf{Tor}\text{\rm-}\mathsf{dim}}\nolimits}
\nc{\Hilb}{\mathop{\mathsf{Hilb}}\nolimits}
\nc{\Spec}{\mathop{\mathsf{Spec}}\nolimits}
\nc{\Pic}{\mathop{\mathsf{Pic}}\nolimits}
\nc{\Tr}{\mathop{\mathsf{Tr}}\nolimits}
\nc{\Cone}{\mathop{\mathsf{Cone}}\nolimits}
\nc{\Ker}{\mathop{\mathsf{Ker}}\nolimits}
\nc{\Coker}{\mathop{\mathsf{Coker}}\nolimits}
\nc{\codim}{\mathop{\mathsf{codim}}\nolimits}
\nc{\sing}{{\mathsf{sing}}}
\nc{\supp}{\mathop{\mathsf{supp}}}
\nc{\vol}{\mathop{\mathsf{vol}}\nolimits}
\nc{\ch}{\mathop{\mathsf{ch}}\nolimits}
\nc{\perf}{{\mathsf{perf}}}
\nc{\rank}{\mathop{\mathsf{rank}}}
\nc{\rk}{\mathop{\mathsf{rk}}}
\nc{\Pf}{{\mathsf{Pf}}}
\nc{\Gr}{{\mathsf{Gr}}}
\nc{\OGr}{{\mathsf{OGr}}}
\nc{\LGr}{{\mathsf{LGr}}}
\nc{\IGr}{{\mathsf{IGr}}}
\nc{\IFl}{{\mathsf{IFl}}}
\nc{\OF}{{\mathsf{OF}}}
\nc{\Fl}{{\mathsf{Fl}}}
\nc{\Bl}{{\mathsf{Bl}}}
\nc{\GL}{{\mathsf{GL}}}
\nc{\PGL}{{\mathsf{PGL}}}
\nc{\SL}{{\mathsf{SL}}}
\nc{\SP}{{\mathsf{Sp}}}
\nc{\Spin}{{\mathsf{Spin}}}
\nc{\Tot}{{\mathsf{Tot}}}
\nc{\ev}{{\mathsf{ev}}}
\nc{\od}{{\mathsf{odd}}}
\nc{\coev}{{\mathsf{coev}}}
\nc{\id}{{\mathsf{id}}}
\nc{\opp}{{\mathsf{opp}}}
\nc{\tdim}{\mathop{\Tor\dim}}
\nc{\ad}{{\mathop{\mathsf ad}}}
\nc{\sg}{{\mathop{\mathsf sg}}}
\nc{\hf}{{\mathop{\mathsf hf}}}
\nc{\gr}{{\mathop{\mathsf gr}}}
\nc{\qgr}{{\mathop{\mathsf qgr}}}
\nc{\Coh}{{\mathop{\mathsf Coh}}}
\nc{\fsl}{{\mathfrak{sl}}}
\nc{\fso}{{\mathfrak{so}}}
\nc{\fgl}{{\mathfrak{gl}}}
\nc{\Rep}{{\mathsf{Rep}}}
\nc{\Xt}{{\tilde{X}}}
\nc{\Xb}{{\bar{X}}}
\nc{\CUt}{{\tilde{\CU}}}
\nc{\CAt}{{\tilde{\CA}}}
\nc{\CUb}{{\bar{\CU}}}
\nc{\Hb}{{\bar{H}}}
\nc{\pt}{\tilde{p}}
\nc{\Vb}{\bar{V}}
\nc{\io}{{\iota}}
\nc{\iot}{{\tilde{\iota}}}
\nc{\lort}[1]{\vphantom{{#1}}^\perp{#1}}
\theoremstyle{plain}
\newtheorem*{theorem*}{Theorem}
\newtheorem{theorem}{Theorem}[section]
\newtheorem{conjecture}[theorem]{Conjecture}
\newtheorem{lemma}[theorem]{Lemma}
\newtheorem{proposition}[theorem]{Proposition}
\newtheorem{corollary}[theorem]{Corollary}
\theoremstyle{definition}
\newtheorem{definition}[theorem]{Definition}
\theoremstyle{remark}
\newtheorem{remark}[theorem]{Remark}
\title{On the bounded derived category of $\IGr(3,7)$}
\author{Anton Fonarev}
\address{\sloppy
\parbox{0.95\textwidth}{
   Algebraic Geometry Section, Steklov Mathematical Institute of Russian Academy of Sciences,
8 Gubkin str., Moscow 119991 Russia
\hfill\\[5pt]
National Research University Higher School of Economics, Russian Federation,
Laboratory of Mirror Symmetry, NRU HSE, 6 Usacheva str., Moscow, Russia, 119048
\hfill
}\bigskip}
\email{avfonarev@mi.ras.ru}
\date{}
\thanks{The author is partially supported by Laboratory of Mirror Symmetry NRU HSE, RF Government grant, ag. No 14.641.31.0001.
  The author is a ``Young Russian Mathematics'' award winner and a Simons-IUM fellow
  and would like to thank its sponsors and jury.}
\begin{document}

\begin{abstract}
   We construct a minimal Lefschetz decomposition of the bounded derived category
   of coherent sheaves on the isotropic Grassmannian $\IGr(3,7)$. Moreover, we show
   that $\IGr(3, 7)$ admits a full exceptional collection consisting of equivariant
   vector bundles.
\end{abstract}

\maketitle

\section{Introduction}

One of the most important invariants of a smooth projective variety $X$ is the
bounded derived category $D^b(X)$ of coherent sheaves on it. As it often
happens, the bounded derived category (from now on we will drop the word bounded)
is rather easy to define, but quite difficult to describe explicitly.
A rather fruitful approach to the latter problem is to split the derived
category into smaller pieces, which is precisely the notion of a semiorthogonal
decomposition. Then one can study the components of a given decomposition
on their own and, finally, how they are glued together.

In the best case scenario one can decompose the derived category in such a way
that the pieces are as simple as they can get: equivalent to the derived category
of a point, which is in turn equivalent to the category of graded finite-dimensional
vector spaces over the base field. Such a decomposition corresponds to what is called a full
exceptional collection. The study of exceptional
collections goes back to seminal works of A.~Beilinson (\cite{beilinson1978coherent})
and M.~Kapranov (\cite{kapranov1988derived}), where it was shown that projective
spaces and, more generally, Grassmannians admit full exceptional collections
consisting of vector bundles.

Having a full exceptional collection is a very strong condition on the variety.
First of all, it is very easy to give an example of a variety whose derived
category does not admit any nontrivial semiorthogonal decompositions at all:
any smooth projective curve of positive genus would suffice (see~\cite{okawa2011semi}).
Next, the Grothendieck group of the variety, $K_0(X)$, must necessarily be free.
Also, it was recently shown that the integral Chow motive of a smooth projective variety
of dimension at most~3 which admits a full exceptional collection is
of Lefschetz type (see~\cite{Gorchinskiy20171827}). Finally, a conjecture by
D.~Orlov predicts that having a full exceptional collection implies rationality.
It is worth mentioning that even though varieties with a full exceptional collection
are rare, it was recently proved that any triangulated category with a full exceptional
collection is of geometric nature, that is, can be embedded in the derived category
of a smooth projective variety (see~\cite{orlov2015geometric,orlov2016smooth,orlov2016gluing}).

Since the work of Beilinson and Kapranov,
it has been conjectured that the bounded derived category
of a rational homogeneous variety admits a full exceptional collection.
Unfortunately, even for general isotropic Grassmannians the best result up to date
is the work of A.~Kuznetsov and A.~Polishchuk
where exceptional collections of maximal possible length are constructed in the derived
categories of the latter (see~\cite{kuznetsov2016exceptional}).
While the general conjecture remains open, there seems to be a slightly overlooked
class of varieties that could shed some light upon the general case. These are
the so-called odd isotropic Grassmannians. Given a vector space $V$ of odd dimension $(2n+1)$
over a field $\kk$ together with a skew-symmetric form $\omega\in\Lambda^2V^*$
of maximal rank, one can look at the variety $\IGr_\omega(k, V)$ parametrizing
$k$-dimensional subspaces in $V$ isotropic with respect to $\omega$. The classical
geometry of isotropic Grassmannians and, more generally, flag varieties was first studied
by Mihai in~\cite{mihai2007odd}. It seems very natural to conjecture that
odd isotropic Grassmannians admit full exceptional collections. While isotropic
varieties of lines are nothing but projective spaces, the case of isotropic planes
can be treated rather easily as $\IGr(2, 2n+1)$ is a hyperplane section of $\Gr(2, 2n+1)$
(see~\cite{kuznetsov2008exceptional, pech2013quantum}). Also, odd isotropic Grassmannians are examples of
horospherical varieties with Picard rank~1 (see~\cite{pasquier2009some}).
A conjecturally full exceptional collection in the derived
category of the horospherical variety related to the group $G_2$ was constructed recently 
in~\cite{gonzales2018geometry}.

In the present paper we construct a full exceptional collection in $D^b(\IGr(3,7))$,
which is the fist uncovered example. This variety is tightly related
to one of the K\"uchle
fourfolds, Fano varieties constructed as zero loci of sections of equivariant vector
bundles on Grassmannians (see~\cite{kuchle1995fano,kuznetsov2015kuchle,kuznetsov2016kuchle}). 
The exceptional collection we construct has a nice block structure (is rectangular Lefschetz),
and we hope that it will
help determine the structure of the derived category of the corresponding K\"uchle variety.

Let $\CU$ and $\CQ$ denote the universal sub and quotient bundles on $\IGr(3,7)$ respectively.

\begin{theorem*}
  The bounded derived category of coherent sheaves on $\IGr(3, 7)$ admits a full rectangular
  Lefschetz exceptional collection consisting of the vector bundles
  
  \begin{equation*}
    D^b(\IGr(3, 7)) =
      \begin{pmatrix*}[r]
        \Lambda^2\CQ & \Lambda^2\CQ(1) & \Lambda^2\CQ(2) & \Lambda^2\CQ(3) & \Lambda^2\CQ(4) \\
        \CU^* & \CU^*(1) & \CU^*(2) & \CU^*(3) & \CU^*(4) \\
        \CO & \CO(1) & \CO(2) & \CO(3) & \CO(4) \\
        \CU & \CU(1) & \CU(2) & \CU(3) & \CU(4) \\
      \end{pmatrix*}.
  \end{equation*}
\end{theorem*}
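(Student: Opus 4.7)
My plan is to establish the two standard requirements of a full exceptional collection in turn: pairwise semiorthogonality of the twenty listed bundles, then fullness of the generated subcategory. The main geometric device is the embedding $\io\colon \IGr(3,7)\hookrightarrow \Gr(3,7)$ as the zero locus of the section of $\Lambda^2\CU^*$ determined by $\omega$. Since $\Lambda^2\CU^*$ has rank~$3 = \codim_{\Gr(3,7)}\IGr(3,7)$, the section is regular and gives the Koszul resolution
\[
  0 \to (\det\CU)^{\otimes 2} \to \Lambda^2(\Lambda^2\CU) \to \Lambda^2\CU \to \CO_{\Gr(3,7)} \to \io_*\CO_{\IGr(3,7)} \to 0
\]
with $\GL_7$-equivariant terms. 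Every bundle in the proposed collection is the restriction of an equivariant bundle from $\Gr(3,7)$, so that for any two of them $E,F$,
\[
  \RHom_{\IGr(3,7)}(E,F) \;\cong\; \RG\bigl(\Gr(3,7),\, E^\vee \otimes F \otimes K^\bullet\bigr),
\]
where $K^\bullet$ is the Koszul complex above. The right-hand side is computable via plethysm, the Littlewood--Richardson rule, and Borel--Weil--Bott for $\GL_7$.

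For semiorthogonality I would list, for each ordered pair of bundles in the collection, the $\Ext$-vanishing that must hold to certify the rectangular Lefschetz structure with block $\langle \Lambda^2\CQ,\CU^*,\CO,\CU\rangle$ of length $5$, and verify it via the formula above. Exceptionality of the individual bundles is the special case when the pair is diagonal. The check is finite and essentially mechanical.

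The main obstacle is fullness. Simply restricting Kapranov's collection from $\Gr(3,7)$ to $\IGr(3,7)$ produces only $12$ generators of $D^b(\IGr(3,7))$, whereas the proposed collection has $20$ objects, so extra geometric input is needed. My plan is to exploit the orbit structure of $\IGr(3,7)$ under the group of linear automorphisms of $V$ preserving $\omega$. The closed orbit is the homogeneous subvariety $\IGr(2,6) = \{U\supset\ker\omega\}$ of codimension $2$, and on the open orbit the projection $\pi\colon V\twoheadrightarrow V/\ker\omega$ realises $\IGr(3,7)\setminus \IGr(2,6)$ as an affine $3$-bundle over $\LGr(3,6)$ via $U\mapsto \pi(U)$. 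The incidence variety
\[
  Z \;=\; \bigl\{(U,L)\in \IGr(3,7)\times\LGr(3,6) \;:\; \pi(U)\subset L\bigr\}
\]
projects to $\LGr(3,6)$ as the $\PP^3$-bundle $\PP_{\LGr(3,6)}(\CU^*\oplus \CO)$ (with $\CU$ the tautological subbundle on $\LGr(3,6)$), while $Z\to \IGr(3,7)$ is the blow-up along $\IGr(2,6)\subset \IGr(3,7)$. Applying Orlov's projective-bundle formula to the first projection and his blow-up formula to the second, combined with known full exceptional collections on $\LGr(3,6)$ and on $\IGr(2,6)$, produces a full exceptional collection on $\IGr(3,7)$. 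The remaining task is to match it, via a sequence of mutations, with the rectangular Lefschetz form of the statement; this identification is expected to be the most delicate part of the proof, though the block symmetry of the target constrains the possible mutations substantially.
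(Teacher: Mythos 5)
Your fullness strategy is exactly the one the paper uses: the incidence variety (the paper's $\Xt$) is simultaneously the blow-up of $\IGr(3,7)$ along the closed orbit $Z\simeq\IGr(2,6)$ and the $\PP^3$-bundle $\PP_{\LGr(3,6)}(\CUt^*)$ over $\Xb=\LGr(3,6)$, and one compares the two resulting semiorthogonal decompositions of $D^b(\Xt)$, feeding in Samokhin's full collection on $\LGr(3,6)$. For semiorthogonality you differ mildly: you push everything to $\Gr(3,7)$ via the Koszul resolution of the section of $\Lambda^2\CU^*$, whereas the paper treats the pairs built from exterior powers of $\CU$ by instead realizing $\IGr(3,7)$ as the zero locus of a section of $\CU^*$ on $\IGr(3,8)$ and applying Borel--Bott--Weil there, reserving the $\Gr(3,7)$ Koszul computation for the pairs involving $\Lambda^2\CQ$. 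Both routes are finite Bott-type checks; yours should go through provided you actually verify termwise acyclicity of the Koszul terms rather than assert it.

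The genuine gap is in the last two sentences of your fullness plan. The blow-up formula does not ``produce a full exceptional collection on $\IGr(3,7)$''; it decomposes $D^b(\Xt)$ as $\langle\iot_*\pt^*D^b(Z)(E),\,p^*D^b(\IGr(3,7))\rangle$, so what must actually be proved is that the subcategory of $D^b(\Xt)$ generated by $\iot_*\pt^*D^b(Z)(E)$ together with the pullbacks of your twenty bundles contains every generator of the projective-bundle decomposition. This ``matching'' is where essentially all of the content of the proof lies, and it is not a formal consequence of the block symmetry of the target: the paper needs ten separate steps, each resting on specially constructed short exact sequences on $\Xt$ relating $\CU$, $\CUb$, $\CUt$, $\CQ$, $\CO(E)$ and sheaves supported on the exceptional divisor, such as $0\to\CU\to\CUb\to\iot_*\CO(E)\to 0$, $0\to\CO(E)\to\CQ\to\CUb^*\to 0$, and $0\to\Lambda^2\CU\to\Lambda^2\CUb\to\iot_*\CW(E)\to 0$, together with a twisted Koszul complex argument for the most stubborn generator $\CUb(3H-2E)$. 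Until you exhibit such a chain of sequences (or an equivalent mutation scheme), the fullness claim is unproved; everything before that point is setup that both you and the paper share.
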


We work over an algebraically closed filed $\kk$ of characteristic~0.
The paper is organized as follows. In the second section we collect preliminaries
form the theory of derived categories and equivariant vector bundles on rational
homogeneous varieties. In the third section we introduce the geometric setting and
construct some short exact sequences of coherent sheaves that are essential to the proof
of the main theorem. Finally, in the fourth section we prove some vanishing statements
for equivariant vector bundles on isotropic Grassmannians and give a proof of the main
theorem.

\subsection*{Acknowledgments}
I would like to thank Alexander Kuznetsov and Dmitry Orlov for several useful conversations.
I would also like to thank University of Geneva and Andras Szenes for hospitality and
excellent working conditions under which the present paper was partially written.

\section{Preliminaries}

\subsection{Schur functors}
Given a positive integer $n$, we denote by $P^+_n$ the set of
weakly decreasing sequences of $n$ integer numbers
\begin{equation*}
  P^+_n = \left\{ \lambda\in \ZZ^n \mid \lambda_1\geq \lambda_2\geq \ldots \geq \lambda_n \right\},
\end{equation*}
and identify it with the set of dominant weights for $\GL_n$.
There is a natural partial inclusion order on $P^+_n$ given by
\begin{equation*}
  \mu\subseteq\lambda\quad \Leftrightarrow\quad \mu_i\leq\lambda_i\quad \text{for all}\quad i=1,\ldots,n.
\end{equation*}
We denote by $\Sigma^\lambda$ the Schur functor corresponding to $\lambda\in P^+_n$.
Our convention is that $\Sigma^{(k,0,\ldots,0)}=S^k$ is the $k$-th symmetric power functor.
Given a pair of elements $\mu,\lambda\in P^+_n$ such that $\mu\subseteq\lambda$, we put
$|\lambda/\mu| = \Sigma_i (\lambda_i-\mu_i)$.

Let $\CU$ be a vector bundle of rank $n$ on a smooth algebraic variety. Then
$\Sigma^\lambda\CU\simeq\Sigma^{-\lambda}\CU^*$, where
$-\lambda=(-\lambda_n, -\lambda_{n-1},\ldots,-\lambda_1)$.
Given $\lambda,\mu\in P^+_n$, the Littlewood--Richardson rule provides a recipe to
decompose the tensor product $\Sigma^\lambda\CU\otimes\Sigma^\mu\CU$
into a direct sum of bundles of the form $\Sigma^\alpha\CU$.
In the present paper we will need the simplest case of the Littlewood--Richardson rule,
namely, Pieri's formulas.

\begin{lemma}[Pieri's Formulas]
  \label{lm:pieri}
  Let $\lambda\in P^+_n$, let $\CU$ be a vector bundle on a smooth algebraic variety,
  and let $k$ be a positive integer. There are isomorphisms
  \begin{equation*}
    \Sigma^\lambda\CU\otimes S^k\CU \simeq \bigoplus_{\mu\in HS_\lambda^k}\Sigma^\mu\CU
    \quad\text{and}\quad
    \Sigma^\lambda\CU\otimes \Lambda^k\CU \simeq \bigoplus_{\mu\in VS_\lambda^k}\Sigma^\mu\CU,
  \end{equation*}
  where
  \begin{align*}
    HS_\lambda^k &= \left\{ \mu\in P^+_n \mid \mu\supseteq\lambda,\  |\mu/\lambda|=k,\ \text{and}\ \mu_1\geq\lambda_1\geq\mu_2\geq\lambda_2\geq\ldots\geq\mu_n\geq\lambda_n \right\}, \\
    VS_\lambda^k &= \{ \mu\in P^+_n \mid \mu\supseteq\lambda,\ |\mu/\lambda|=k,\ \text{and}\ \lambda_i+1\geq\mu_i\geq\lambda_i\quad \text{for all}\quad i=1,\ldots,n \}.
  \end{align*}
\end{lemma}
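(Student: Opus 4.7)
The plan is to reduce the geometric statement to a representation-theoretic identity, and then invoke the classical Pieri rule at the level of characters. The Schur functor $\Sigma^\lambda$ is defined so that applying it to a rank-$n$ vector bundle $\CU$ corresponds to applying the irreducible $\GL_n$-representation of highest weight $\lambda$ fiberwise to the associated frame bundle. Consequently, any $\GL(V)$-decomposition of $\Sigma^\lambda V \otimes S^k V$ (resp.\ $\Sigma^\lambda V \otimes \Lambda^k V$) for $V = \kk^n$ induces the same decomposition of vector bundles. Hence it suffices to prove the stated isomorphisms as identities of polynomial $\GL(V)$-representations.

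Since polynomial representations of $\GL(V)$ of a fixed total degree are semisimple and are determined by their characters, the problem reduces to verifying the two classical Pieri identities
\begin{equation*}
  s_\lambda \cdot h_k = \sum_{\mu \in HS_\lambda^k} s_\mu, \qquad s_\lambda \cdot e_k = \sum_{\mu \in VS_\lambda^k} s_\mu,
\end{equation*}
where $s_\lambda$, $h_k$, and $e_k$ denote the Schur, complete homogeneous, and elementary symmetric polynomials corresponding to the characters of $\Sigma^\lambda V$, $S^k V$, and $\Lambda^k V$ respectively. Unwinding the definitions, the inequalities cutting out $HS_\lambda^k$ and $VS_\lambda^k$ are precisely the standard combinatorial descriptions of horizontal and vertical strips of size $k$: the interlacing $\mu_i \geq \lambda_i \geq \mu_{i+1}$ forbids two boxes of $\mu/\lambda$ in the same column, while $\mu_i - \lambda_i \in \{0,1\}$ forbids two boxes in the same row.

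The Pieri formulas themselves can be established in several ways. One option is to use the Jacobi--Trudi determinantal formula $s_\lambda = \det(h_{\lambda_i - i + j})$ together with a determinantal expansion of $s_\lambda \cdot h_k$, passing to the $e_k$ case via the $\omega$-involution on symmetric functions. A more bijective approach applies the Robinson--Schensted--Knuth correspondence to pairs $(T, w)$ where $T$ is a semistandard tableau of shape $\lambda$ and $w$ is a weakly (resp.\ strictly) increasing word of length $k$. The combinatorial core of the argument—verifying that each irreducible summand $\Sigma^\mu V$ appears with multiplicity equal to the number of strips of the prescribed type—is the only real technical point, and it is entirely standard; in practice one would simply cite a reference such as Macdonald's book on symmetric functions rather than reproduce the argument in full.
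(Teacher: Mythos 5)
Your argument is correct and is the standard one; the paper gives no proof of this lemma at all and simply refers the reader to Weyman's book, so your reduction to the character-level Pieri identities (via semisimplicity and fiberwise application of Schur functors to the frame bundle) is exactly the intended content of that citation. One small point worth adding: the lemma allows $\lambda\in P^+_n$ with negative entries, so before restricting to polynomial representations you should twist by a power of the determinant, $\Sigma^\lambda\CU\simeq\Sigma^{\lambda+(m,\ldots,m)}\CU\otimes(\det\CU)^{-m}$, and note that this twist shifts $HS^k_\lambda$ and $VS^k_\lambda$ compatibly, after which your character argument applies verbatim.
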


Let $Y_n\subset P^+_n$ denote the set of those $\lambda\in P^+_n$ for which
$\lambda_n\geq 0$. This subset is naturally identified with the set of Young
diagrams with at most $n$ rows.
For a given $\lambda\in Y_n$ let
\begin{equation*}
  d(\lambda)=\max\{i \mid \lambda_i\geq i\}
\end{equation*}
denote the length of its diagonal, 
let $\lambda^T\in P^+_{\lambda_1}$ denote the  transposed Young diagram,
and let $|\lambda|=\Sigma \lambda_i$ denote the number of boxes in $\lambda$. 
We say that $\lambda$ is symmetric if $\lambda=\lambda^T$ and that
$\lambda$ is \emph{almost symmetric} if the diagram
\begin{equation*}
  (\underbrace{\lambda^T_1-1, \lambda^T_2-1, \ldots, \lambda^T_{d(\lambda)}-1}_{d(\lambda)\text{ terms}}, \lambda^T_{d(\lambda)+1},\ldots,\lambda^T_{\lambda_1})
\end{equation*}
is symmetric.

As before, let $\CU$ be a vector bundle of rank $n$ on a smooth projective variety.
The importance of the class of almost symmetric diagrams is illustrated by the
following lemma.

\begin{lemma}\label{lm:ll2}
  There is an isomorphism of vector bundles
  $\Lambda^k\Lambda^2\CU\simeq \bigoplus_{\lambda\in L_k}\Sigma^\lambda\CU$,
  where $L_k\subseteq Y_n$ denotes the set of almost symmetric diagrams with $2k$ boxes.
\end{lemma}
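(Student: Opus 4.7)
My plan is to reduce the statement to an identity of characters and invoke a classical identity of Littlewood. Since both sides of the asserted isomorphism are polynomial Schur-type functors of $\CU$, coming from honest polynomial $\GL_n$-representations, the statement is determined by characters; equivalently, by the splitting principle it suffices to verify the identity after passing to the Chern roots $x_1,\ldots,x_n$ of $\CU$. Because $\Lambda^2\CU$ has Chern roots $\{x_ix_j\}_{i<j}$, the generating series for the left-hand sides is
\begin{equation*}
  \sum_{k\ge 0}t^k\,\ch(\Lambda^k\Lambda^2\CU)\;=\;\prod_{1\le i<j\le n}(1+tx_ix_j).
\end{equation*}
The lemma then reduces to the classical Littlewood identity
\begin{equation*}
  \prod_{i<j}(1+x_ix_j)\;=\;\sum_{\lambda}s_\lambda(x_1,\ldots,x_n),
\end{equation*}
the sum running over partitions $\lambda$ whose Frobenius coordinates $(a_1,\ldots,a_d\mid b_1,\ldots,b_d)$ (with $d=d(\lambda)$, $a_i=\lambda_i-i$, $b_i=\lambda^T_i-i$) satisfy $b_i=a_i+1$ for every $i$; this identity can be found, for instance, in Macdonald, \emph{Symmetric Functions and Hall Polynomials}, I.5.

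It then remains only to match conventions. I would check that the Frobenius condition $b_i=a_i+1$ for $i\le d$ is equivalent to $\lambda^T_i=\lambda_i+1$ for $i\le d$, which is in turn equivalent to the diagram $(\lambda^T_1-1,\ldots,\lambda^T_d-1,\lambda^T_{d+1},\ldots,\lambda^T_{\lambda_1})$ being symmetric, i.e.\ to $\lambda$ being almost symmetric in the sense of the statement. Both equivalences are elementary unwindings of the definitions; the key observation making the second one go through is that $\lambda^T_d=\lambda_d+1$ together with $\lambda_d\ge d>\lambda_{d+1}$ forces $\lambda_{d+1}=d$, after which the transpose of the truncated diagram can be computed directly. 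Since such partitions automatically satisfy $|\lambda|=2\sum a_i+2d$, extracting the coefficient of $t^k$ in the Littlewood identity singles out exactly the subset $L_k\subseteq Y_n$ of almost symmetric diagrams with $2k$ boxes.

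The only nontrivial input is Littlewood's identity itself, which I would simply cite rather than reproduce. A self-contained alternative is possible by induction on $n$: split off a line bundle $\CU=\CU'\oplus L$ to obtain
\begin{equation*}
  \Lambda^k\Lambda^2(\CU'\oplus L)\;=\;\bigoplus_{p+q=k}\Lambda^p\Lambda^2\CU'\otimes\Lambda^q\CU'\otimes L^{\otimes q},
\end{equation*}
apply the inductive hypothesis to the first factor, and reassemble using Pieri's formulas (Lemma~\ref{lm:pieri}). The combinatorial challenge in this alternative route is verifying that the resulting Littlewood--Richardson sum of Schur functors telescopes to the set $L_k$; that bookkeeping would be the main obstacle in a self-contained proof.
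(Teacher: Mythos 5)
Your argument is correct, and it is worth noting that the paper does not actually prove this lemma: it is stated as a known fact with a pointer to Weyman's book, where the decomposition of $\Lambda^\bullet(\Lambda^2 V)$ into Schur functors indexed by partitions with Frobenius coordinates $(\alpha\mid\alpha+1)$ is derived from exactly the symmetric-function identity you cite. So your reduction to characters plus Littlewood's identity $\prod_{i<j}(1+x_ix_j)=\sum_{\lambda=(\alpha\mid\alpha+1)}s_\lambda$ is not a different route so much as a self-contained version of the intended one; the only genuinely new content you must supply is the translation between the Frobenius condition $b_i=a_i+1$ and the paper's ad hoc notion of ``almost symmetric,'' and your outline of that translation (remove the bottom box of each of the first $d(\lambda)$ columns of $\lambda$, turning each hook $(a_i\mid a_i+1)$ into the symmetric hook $(a_i\mid a_i)$) is the right one. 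One sentence of your sketch is garbled: you assert that $\lambda_d\ge d>\lambda_{d+1}$ ``forces $\lambda_{d+1}=d$,'' which contradicts itself as written. The correct statement is that $d(\lambda)=d$ only gives $\lambda_{d+1}\le d$, while $\lambda^T_d=\lambda_d+1\ge d+1$ gives $\lambda_{d+1}\ge d$, whence $\lambda_{d+1}=d$; this is a typo-level slip, not a gap, but it should be fixed before the equivalence of the two descriptions of $L_k$ can be called verified.
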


We refer the reader to~\cite{weyman} for further details.

\subsection{Borel--Bott--Weil theorem}

The celebrated Borel--Bott--Weil theorem fully computes cohomology
groups of irreducible equivariant vector bundles on rational homogeneous
varieties. In what follows we present stripped down versions of it for
classical and isotropic Grassmannians.
For all the details we refer the reader to~\cite{weyman}.

Let $\Gr(k, V)$ denote the Grassmannian of $k$-dimensional subspaces
in a fixed $n$-dimensional vector space $V$. Let $\CU$ and $\CQ$ denote
the universal sub and quotient bundles respectively. For instance, one
has the following short exact sequence of vector bundles on $\Gr(k, V)$:
\begin{equation*}
  0\to \CU\to V\otimes\CO\to \CQ\to 0.
\end{equation*}
The Grassmannian comes with a natural action of the linear algebraic
group $\GL(V)$.
It is well known that every irreducible equivariant vector bundle on
$\Gr(k, V)$ is of the form $\Sigma^\lambda\CU^*\otimes \Sigma^\mu\CQ^*$
for some $\lambda\in P^+_k$ and $\mu\in P^+_{n-k}$.
Given a permutation $\sigma\in\mathfrak{S}_n$ of the set $\{1,2,\ldots,n\}$,
let $\ell(\sigma)$ denote the number of pairs of indices
$1\leq p < q\leq n$ such that $\sigma(p) > \sigma(q)$.

\begin{theorem}
  \label{thm:bbw}
  Consider the sequence
  \begin{equation*}
    \alpha=(n+\lambda_1,\, (n-1)+\lambda_2,\,\ldots,\, (n-k+1)+\lambda_k,\, n-k+\mu_1,\, \ldots,\, 2+\mu_{n-k-1},\, 1+\mu_n).
  \end{equation*}
  If at least two of the elements in $\alpha$ are equal, then
  \begin{equation*}
    H^\bullet(\Gr(k, V), \Sigma^\lambda\CU^*\otimes\Sigma^\mu\CQ^*) = 0.
  \end{equation*}
  If all the elements in $\alpha$ are distinct, let $\sigma\in\mathfrak{S}_n$
  denote the unique permutation such that
  \begin{equation*}
    \alpha_{\sigma(1)}>\alpha_{\sigma(2)}>\ldots>\alpha_{\sigma(n)},
  \end{equation*}
  and let
  \begin{equation*}
    \gamma = (\alpha_{\sigma(1)}-n,\, \alpha_{\sigma(2)}-(n-1),\,\ldots,\, \alpha_{\sigma(n)}-1).
  \end{equation*}
  Then
  \begin{equation*}
    H^i(\Gr(k, V), \Sigma^\lambda\CU^*\otimes\Sigma^\mu\CQ^*) = \begin{cases}
      \Sigma^\gamma V^*, & \text{if } i = \ell(\sigma), \\
      0 & \text{otherwise.}
    \end{cases}
  \end{equation*}
\end{theorem}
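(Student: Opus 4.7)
The plan is to deduce the statement from the general Borel--Weil--Bott theorem for $\GL(V)$-equivariant sheaves on a homogeneous space, specialized to the parabolic quotient defining the Grassmannian.

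First I would identify $\Gr(k,V)$ with $\GL(V)/P$, where $P\subset \GL(V)$ is the parabolic subgroup stabilizing a fixed $k$-dimensional subspace of $V$. Its Levi factor is $L=\GL_k\times\GL_{n-k}$, and the irreducible $\GL(V)$-equivariant vector bundles on $\Gr(k,V)$ correspond bijectively to irreducible representations of~$L$. Under this dictionary, $\Sigma^\lambda\CU^*\otimes \Sigma^\mu\CQ^*$ is induced from the $L$-representation of highest weight $(\lambda_1,\ldots,\lambda_k,\mu_1,\ldots,\mu_{n-k})$; this can be verified either on a torus-fixed point of $\Gr(k,V)$ or by applying the Borel--Weil theorem along the fibers of $\Fl(V)\to \Gr(k,V)$ and invoking the projection formula to descend cohomology computations from the full flag variety.

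With this dictionary in hand, the heart of the argument is Bott's algorithm. Write $\rho=(n,n-1,\ldots,1)$ for a representative of the half-sum of positive roots of $\GL(V)$; then the $\rho$-shift of the highest weight above is precisely the sequence $\alpha$ from the statement. Bott's theorem asserts the following dichotomy. If $\alpha$ lies on a wall of a Weyl chamber (equivalently, has a repeated entry), then all cohomology groups vanish. Otherwise there is a unique $\sigma\in \mathfrak{S}_n$ sorting $\alpha$ into strictly decreasing order, the only nonzero cohomology group lies in degree $\ell(\sigma)$, and it realizes the irreducible $\GL(V)$-representation of highest weight $\sigma(\alpha)-\rho = \gamma$. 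This matches exactly the two cases of the statement.

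The main difficulty is not conceptual but a matter of bookkeeping: one has to fix the choice of positive roots, the identification of dominant weights for $L$ with pairs of Young-diagram-type sequences, the duality conventions distinguishing $\CU$ from $\CU^*$ (and $\CQ$ from $\CQ^*$), and the direction in which $\mathfrak{S}_n$ acts on weight vectors. Once all of these are aligned, the claim is simply the type~$A$ specialization of the Borel--Weil--Bott theorem in the formulation given in~\cite{weyman}.
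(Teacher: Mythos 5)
Your proposal is correct: the paper does not prove Theorem~\ref{thm:bbw} at all but states it as a classical result with a reference to~\cite{weyman}, and your sketch --- realizing $\Gr(k,V)$ as $\GL(V)/P$, identifying $\Sigma^\lambda\CU^*\otimes\Sigma^\mu\CQ^*$ with the bundle induced from the $L=\GL_k\times\GL_{n-k}$ weight $(\lambda,\mu)$, applying the $\rho$-shift with $\rho=(n,\ldots,1)$, and invoking Bott's dormant/singular dichotomy --- is exactly the standard derivation found there. The bookkeeping of conventions you flag is indeed the only remaining content, so nothing essential is missing.
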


The following lemma first appeared in Kapranov's work on the derived
categories of classical Grassmannians and is a simple corollary of the
previous theorem.

\begin{lemma}[{\cite[Lemma~3.2]{kapranov1988derived}}]\label{lm:kap-dual}
  Let $X=\Gr(k, n)$, and let $\lambda\in Y_k$ and $\mu\in Y_{n-k}$ be such that
  \begin{equation*}
    n-k\geq \lambda_1\geq\lambda_2\geq\ldots\geq\lambda_k\geq 0 \quad \text{and} \quad
    k\geq \mu_1 \geq \mu_2 \geq \ldots \geq \mu_{n-k} \geq 0.
  \end{equation*}
  Then
  \begin{equation*}
    H^\bullet(X, \Sigma^\lambda\CU\otimes\Sigma^\mu \CQ^*) =
    \begin{cases}
      \kk\left[-|\lambda|\right], & \text{if } \lambda = \mu^T, \\
      0, & \text{otherwise}.
    \end{cases}
  \end{equation*}
\end{lemma}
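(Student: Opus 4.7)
The plan is to reduce the lemma to Theorem~\ref{thm:bbw}. Using $\Sigma^\lambda\CU\simeq\Sigma^{-\lambda}\CU^*$ with $-\lambda=(-\lambda_k,\ldots,-\lambda_1)$, I would form the weight sequence $\alpha\in\ZZ^n$ prescribed by the theorem. As a sequence of $n$ integers, $\alpha$ splits into a $\CU$-block whose entries (as a set) are $\{a_i=n-k+i-\lambda_i\}_{i=1}^k$ and a $\CQ$-block with entries $\{b_j=(n-k+1)-j+\mu_j\}_{j=1}^{n-k}$. The hypotheses $0\le\lambda_i\le n-k$ and $0\le\mu_j\le k$ together with the monotonicity of $\lambda$ and $\mu$ imply that every $a_i$ and every $b_j$ lies in $\{1,\ldots,n\}$ and that each block is a set of pairwise distinct values. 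Hence the entries of $\alpha$ are all distinct if and only if $\{a_i\}\cap\{b_j\}=\emptyset$, in which case the two sets partition $\{1,\ldots,n\}$.

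The combinatorial core of the proof is the identification
\begin{equation*}
  \{1,\ldots,n\}\setminus\{a_i : 1\le i\le k\} = \{(n-k+1)-j+\lambda^T_j : 1\le j\le n-k\}.
\end{equation*}
I would establish this by induction on $|\lambda|$: removing a corner $(r,c)$ of $\lambda$ increases $a_r$ by one and decreases the $c$-th element of the right-hand set by one, and the numerical coincidence $a_r+1=(n-k+1)-c+\lambda^T_c$ (valid because $\lambda^T_c=r$ when $(r,c)$ is a corner) ensures the two sides remain complementary. This is precisely the Maya-diagram bijection between Young diagrams in a $k\times(n-k)$ rectangle and $k$-element subsets of $\{1,\ldots,n\}$. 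Granting it, the blocks of $\alpha$ are disjoint if and only if $\mu_j=\lambda^T_j$ for every $j$, that is, $\mu=\lambda^T$. In every other case Theorem~\ref{thm:bbw} forces $H^\bullet(X,\Sigma^\lambda\CU\otimes\Sigma^\mu\CQ^*)=0$.

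Assume finally $\mu=\lambda^T$, so that $\alpha$ is a permutation of $\{1,\ldots,n\}$. Then the sequence $\gamma$ of Theorem~\ref{thm:bbw} vanishes, $\Sigma^\gamma V^*=\kk$, and the cohomology is concentrated in degree $\ell(\sigma)$, where $\sigma$ is the sorting permutation. Since each block of $\alpha$ is strictly decreasing, $\sigma$ is a shuffle, and its inversions are in bijection with pairs $(i,j)$, $1\le i\le k$ and $1\le j\le n-k$, satisfying $b_j>a_i$. Rewritten, $b_j>a_i$ becomes $\lambda_i+\mu_j\ge i+j$, and for $\mu=\lambda^T$ this is the classical characterization of the box $(i,j)$ lying inside $\lambda$. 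Therefore $\ell(\sigma)=|\lambda|$, and the cohomology is $\kk$ placed in degree $|\lambda|$. The only genuinely nontrivial step is the Maya-diagram complement identification; the rest is careful index bookkeeping.
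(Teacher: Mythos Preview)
Your proof is correct and follows exactly the approach the paper sketches in Remark~\ref{rm:kap-dual}: apply Theorem~\ref{thm:bbw} and reduce everything to the combinatorial claim that the entries of the sequence~\eqref{eq:kap-dual} are distinct if and only if $\lambda=\mu^T$. The paper states only this ``weak form'' of the combinatorics and leaves the rest to the reader (and to Kapranov's original paper); you go further and supply the Maya-diagram complement argument as well as the inversion count $\ell(\sigma)=|\lambda|$, both of which are carried out correctly.
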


\begin{remark}\label{rm:kap-dual}
  Lemma~\ref{lm:kap-dual} follows directly from the Borel--Bott--Weil theorem
  and a simple combinatorial statement. In its weak form the statement says that
  under the assumptions of the lemma the elements of the sequence
  \begin{equation}\label{eq:kap-dual}
    \alpha=\left(n - \lambda_k, (n-1)-\lambda_{k-1},\ldots, (n-k+1)-\lambda_1,
      (n-k)+\mu_1, (n-k-1)+\mu_2, \ldots, 1+\mu_{n-k}\right)
  \end{equation}
  are distinct if an only if $\lambda=\mu^T$. Remark that the statement is
  translation invariant, that is, it remains true if one transforms
  $\alpha$ by adding a fixed constant to every term.
\end{remark}

Using the previous remark we can formulate the following simple vanishing criterion.

\begin{lemma}\label{lm:kap-gen}
  Let $\lambda\in P^+_n$ and $\mu\in P^+_{n-k}$ be such that for some integers
  $1\leq p\leq k$ and $1\leq q\leq n-k$ one has
  \begin{equation*}
    q\geq \lambda_1\geq \ldots \geq \lambda_p\geq 0
    \quad \text{and} \quad
    p\geq \mu_1\geq\ldots\geq \mu_q\geq 0.
  \end{equation*}
  If $(\lambda_1,\ldots,\lambda_p)\neq (\mu_1,\ldots,\mu_q)^T$, then
  \begin{equation*}
    H^\bullet(X, \Sigma^\lambda\CU\otimes\Sigma^\mu \CQ^*) = 0.
  \end{equation*}
\end{lemma}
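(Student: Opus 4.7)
The plan is to apply Borel--Bott--Weil (Theorem~\ref{thm:bbw}) and reduce the claim to a pigeonhole argument on a carefully chosen sub-sequence of the weight sequence $\alpha$. Since $\Sigma^\lambda\CU\simeq\Sigma^{-\lambda}\CU^*$, the sequence attached to the bundle $\Sigma^\lambda\CU\otimes\Sigma^\mu\CQ^*$ is exactly the one displayed in~\eqref{eq:kap-dual}. To conclude vanishing, by Theorem~\ref{thm:bbw} it suffices to exhibit two coinciding entries in $\alpha$.

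The key observation is that the $p+q$ entries at positions $k-p+1,\ldots,k+q$ of $\alpha$ are
\begin{equation*}
  (n-k+p)-\lambda_p,\ (n-k+p-1)-\lambda_{p-1},\ \ldots,\ (n-k+1)-\lambda_1,\ (n-k)+\mu_1,\ \ldots,\ (n-k-q+1)+\mu_q,
\end{equation*}
and after subtracting the constant $n-k-q$ from each of them they become
\begin{equation*}
  (p+q)-\lambda_p,\ \ldots,\ (q+1)-\lambda_1,\ q+\mu_1,\ \ldots,\ 1+\mu_q,
\end{equation*}
which is precisely the Borel--Bott--Weil sequence associated to $\Sigma^{\lambda'}\CU_0\otimes\Sigma^{\mu'}\CQ_0^*$ on the auxiliary Grassmannian $\Gr(p,p+q)$, where $\lambda'=(\lambda_1,\ldots,\lambda_p)$ and $\mu'=(\mu_1,\ldots,\mu_q)$. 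The size constraints $q\geq\lambda_1$ and $p\geq\mu_1$ are exactly the hypotheses needed to apply Remark~\ref{rm:kap-dual} on this smaller Grassmannian.

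Since distinctness of the entries is translation invariant, the combinatorial statement of Remark~\ref{rm:kap-dual} tells us that these $p+q$ values are pairwise distinct if and only if $\lambda'=(\mu')^T$. Thus the assumption $(\lambda_1,\ldots,\lambda_p)\neq(\mu_1,\ldots,\mu_q)^T$ produces two equal entries inside this block, hence inside $\alpha$, and Theorem~\ref{thm:bbw} gives the desired vanishing. The only real content of the argument is the identification of the sub-block with BBW data for $\Gr(p,p+q)$; the remaining entries of $\alpha$ (those depending on $\lambda_{p+1},\ldots,\lambda_k$ and $\mu_{q+1},\ldots,\mu_{n-k}$) play no role and need not be controlled. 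The main obstacle is simply matching indices correctly, which is a routine bookkeeping step.
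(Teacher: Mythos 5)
Your proof is correct and follows the paper's argument exactly: both reduce to Borel--Bott--Weil and use Remark~\ref{rm:kap-dual} (with its translation invariance) to find a repeated entry in the subsequence of $\alpha$ occupying positions $k-p+1,\ldots,k+q$. The index bookkeeping and the identification with the BBW data for $\Gr(p,p+q)$ are carried out correctly.
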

\begin{proof}
  According to the Borel--Bott--Weil theorem, it is enough to show that the elements
  of the sequence $\alpha$ are not distinct. Now, it follows from Remark~\ref{rm:kap-dual}
  that the elements of the subsequence
  \begin{equation*}
    \left((n-k+p)-\lambda_p, \ldots, (n-k+1)-\lambda_1, (n-k)+\mu_1, \ldots, (n-k+1-q) + \mu_q\right)
  \end{equation*}
  are not distinct.
\end{proof}

Now, let $V$ be a $2n$-dimensional vector space with a fixed symplectic form
and let $\IGr(k, V)$ denote the Grassmannian of isotropic $k$-dimensional subspaces.
Again, we denote by $\CU$ the tautological rank $k$ bundle on $\IGr(k, V)$,
which is the pullback of the tautological bundle on $\Gr(k,V)$ under the natural
embedding $\IGr(k, V)\hookrightarrow \Gr(k, V)$.
The following is a simple corollary from the fully fledged Borel--Bott--Weil
theorem for isotropic Grassmannians.

\begin{proposition}\label{thm:bbwc}
  Given an element $\lambda\in P^+_k$, consider the sequence
  \begin{equation*}
    \alpha = (n+\lambda_1,\ldots, n-k+1+\lambda_k, n-k, \ldots, 1).
  \end{equation*}
  If at least two elements in the sequence $\alpha$ have equal
  absolute values, then
  \begin{equation*}
    H^\bullet(\IGr(k, V), \Sigma^\lambda\CU^*) = 0.
  \end{equation*}
\end{proposition}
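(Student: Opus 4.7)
The plan is to reduce the statement to the full Borel--Bott--Weil theorem for $\SP_{2n}$, applied to the realization $\IGr(k, V)\simeq \SP_{2n}/P$ where $P$ is the maximal parabolic attached to the $k$-th simple root of type $C_n$. First I would identify the equivariant bundle $\Sigma^\lambda\CU^*$ as the irreducible homogeneous bundle associated with the $P$-representation of highest weight
\[
\nu = (\lambda_1, \ldots, \lambda_k, 0, \ldots, 0)
\]
(with $n-k$ trailing zeros) in the standard basis of characters of the diagonal torus of $\SP_{2n}$. This is the same identification that underlies Theorem~\ref{thm:bbw} for ordinary Grassmannians, pulled back along the embedding $\IGr(k,V)\hookrightarrow \Gr(k,V)$.

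Next, the half-sum of positive roots for $\SP_{2n}$ is $\rho = (n, n-1, \ldots, 1)$ in the same basis, so that $\nu + \rho$ is precisely the sequence $\alpha$ in the statement. The Weyl group of type $C_n$ is $W(C_n) \simeq (\ZZ/2)^n \rtimes \mathfrak{S}_n$, acting on the weight lattice by signed permutations of coordinates. Its reflection hyperplanes are cut out by the equations $x_i = \pm x_j$ for $i\neq j$, together with $x_i = 0$. The Borel--Bott--Weil theorem asserts that if $\nu + \rho$ lies on any such hyperplane --- equivalently, is not $W(C_n)$-regular --- then the cohomology of the corresponding equivariant bundle vanishes in every degree.

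To conclude, I would observe that the hypothesis --- two elements $\alpha_i$ and $\alpha_j$ share the same absolute value --- means either $\alpha_i = \alpha_j$ or $\alpha_i = -\alpha_j$, so $\alpha$ lies on one of the hyperplanes $x_i = \pm x_j$, and vanishing follows. The main work is really the invocation of Borel--Bott--Weil for $\SP_{2n}$, which the paper already takes as an input and for which Weyman's book is cited. The only care needed is in bookkeeping --- verifying the $\rho$-shift and identifying the weight of $\Sigma^\lambda\CU^*$ so that $\nu + \rho$ matches the displayed sequence $\alpha$ --- and I expect no substantive obstacle beyond this.
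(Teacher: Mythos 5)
Your argument is correct and is precisely the derivation the paper leaves implicit: the paper states this proposition without proof as "a simple corollary from the fully fledged Borel--Bott--Weil theorem for isotropic Grassmannians," citing Weyman, and your identification of the weight of $\Sigma^\lambda\CU^*$, the $\rho$-shift $(n,n-1,\ldots,1)$ for type $C_n$, and the singularity criterion via the walls $x_i=\pm x_j$ of the signed-permutation Weyl group is exactly that corollary spelled out. The bookkeeping matches the displayed sequence $\alpha$, so there is nothing to add.
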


\subsection{Semiorthogonal decompositions}

Let $\CT$ be a triangulated category.
\begin{definition}
  A sequence of full triangulated subcategories
  $\CA_0,\CA_1,\ldots,\CA_n\in \CT$ is called \emph{semiorthogonal} if for all $0\leq i<j\leq n$
  and all $E\in\CA_i$, $F\in\CA_j$ one has $\Hom_\CT(F, E)=0$.
  Let $\langle\CA_0,\CA_1,\ldots,\CA_n\rangle$ denote the smallest full triangulated
  subcategory in $\CT$ containing all $\CA_i$. If $\CT=\langle\CA_0,\CA_1,\ldots,\CA_n\rangle$,
  say that the subcategories $\CA_i$ form a \emph{semiorthogonal decomposition} of $\CT$.
\end{definition}

In nice situations one can construct a semiorthogonal decomposition from a given full triangulated
subcategory alone.
\begin{definition}
  A full triangulated subcategory $\CA\subseteq\CT$ is called \emph{admissible} if the embedding
  functor $\iota:\CA\to\CT$ has both left and right adjoints.
\end{definition}
All the subcategories considered in the present paper are admissible, see, for example,
\cite{bondal1990representable}.
Starting with an admissible subcategory $\CA\subseteq\CT$, one can produce two semiorthogonal decompositions.
Put
\begin{align*}
  \CA^\perp &= \langle X \in \CT \mid \Hom_\CT(Y,X)=0\ \text{for all}\ Y\in\CA\rangle, \\
  \lort{\CA} &= \langle X \in \CT \mid \Hom_\CT(X,Y)=0\ \text{for all}\ Y\in\CA\rangle.
\end{align*}
Then $\CT$ admits semiorthogonal decompositions
\begin{equation*}
  \CT=\langle \CA^\perp, \CA\rangle\quad \text{and}\quad \CT=\langle \CA, \lort{\CA}\rangle.
\end{equation*}
Moreover, for $\CT$ smooth and proper (which is always the case in the present paper)
one can show that both $\CA^\perp$ and $\lort{\CA}$ are admissible as well.
In what follows we denote by $L_\CA$ and $R_\CA$ the \emph{mutation} functors,
which are defined by the following properties:
for any object $X\in\CT$ there exist unique up to isomorphism functorial exact triangles
\begin{equation*}
  R_\CA X\to X\to Y'\to R_\CA X[1], \qquad
  Y'' \to X \to L_\CA X \to Y''[1],
\end{equation*}
with $Y',Y''\in\CA$, $R_\CA X\in\lort{\CA}$, and $L_\CA X\in\CA^\perp$. Both functors
vanish on $\CA$ and define mutually inverse equivalences between $\CA^\perp$ and $\lort{\CA}$.

Given an algebraic variety $X$, we denote by $D^b(X)$ the bounded derived category
of coherent sheaves on~$X$. Let $\CO(1)$ be a line bundle on $X$.
Recall that tensoring with a line bundle is an autoequivalence of $D^b(X)$.
Given a full triangulated subcategory $\CA\subseteq D^b(X)$, denote by $\CA(i)$ the full triangulated
subcategory, which is the image of $\CA$ under the autoequivalence $-\otimes\CO(i)$, where $\CO(i)$
is the $i$-th tensor power of $\CO(1)$.

Let us recall a couple of seminal results, both due to Orlov, along with two simple lemmas,
both of which must be known to specialists.

\begin{theorem}[Orlov's projective bundle formula, \cite{orlov1992projective}]
  Let $X$ be a smooth projective variety, and let $\CE$ be a vector bundle on $X$ of rank $n$.
  Let $p:\PP_X(\CE)\to X$ be the projectivization of $\CE$, and let $\CO(1)$ denote the Grothendieck
  line bundle. Then there is a semiorthogonal decomposition
  \begin{equation}
    \label{eq:orlov_proj}
    D^b(\PP_X(\CE)) = \langle p^*D^b(X)(-n+1), \ldots, p^*D^b(X)(-1), p^*D^b(X)\rangle.
  \end{equation}
\end{theorem}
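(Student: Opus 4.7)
The plan is to verify three properties: (i) each $p^*D^b(X)(i)$ arises from a fully faithful functor, (ii) the listed subcategories are semiorthogonal in the stated order, and (iii) together they generate $D^b(\PP_X(\CE))$. The key input for (i) and (ii) is the relative version of Serre's cohomology computation: since $p$ is a $\PP^{n-1}$-bundle, one has $\mathsf{R}p_*\CO_{\PP(\CE)} = \CO_X$ and, crucially, $\mathsf{R}^\bullet p_*\CO(k) = 0$ in the range $-n+1 \leq k \leq -1$.

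For (i), since tensoring with $\CO(i)$ is an autoequivalence, it suffices to check that $p^*\colon D^b(X) \to D^b(\PP_X(\CE))$ is fully faithful. The adjunction $(p^*,\mathsf{R}p_*)$ and the projection formula give $\Hom(p^*F, p^*G) = \Hom(F, G \otimes \mathsf{R}p_*\CO) = \Hom(F, G)$. For (ii), given $-n+1 \leq i < j \leq 0$ and $F, G \in D^b(X)$, the same two tools yield
$$\Hom(p^*G \otimes \CO(j),\, p^*F \otimes \CO(i)) = \Hom(G,\, F \otimes \mathsf{R}p_*\CO(i-j)),$$
and since $i - j$ ranges over $\{-n+1, \ldots, -1\}$, the vanishing above forces this Hom group to be zero.

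The nontrivial step, and the one I expect to be the main obstacle, is generation (iii). The standard device is a relative resolution of the diagonal on $\PP_X(\CE) \times_X \PP_X(\CE)$, built by iterating Koszul on the tautological inclusion $\CO(-1) \hookrightarrow p^*\CE^*$ on $\PP_X(\CE)$. This produces a complex whose $a$-th term is an external tensor product involving $\CO(-a)$ on the first factor and a pullback of $\Lambda^a\CE$ twisted by $\CO(a)$ on the second, for $a = 0, \ldots, n-1$, resolving the structure sheaf of the relative diagonal. Convolving an arbitrary $G \in D^b(\PP_X(\CE))$ with this complex and invoking flat base change along the second projection exhibits $G$ as an iterated extension of objects of the subcategories $p^*D^b(X)(-a)$, completing the proof. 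An equivalent endgame is to show that any $G$ right-orthogonal to all $p^*D^b(X)(i)$, $i = -n+1, \ldots, 0$, satisfies $\mathsf{R}p_*(G \otimes \CO(-i)) = 0$ for each such $i$, and then to deduce $G = 0$ fibrewise from Beilinson's theorem for $\PP^{n-1}$.
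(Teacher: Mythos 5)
The paper states this theorem without proof, citing Orlov's original article, and your argument is exactly the standard proof given there: full faithfulness and semiorthogonality from $\mathsf{R}p_*\CO(k)=0$ for $-n+1\le k\le -1$ together with adjunction and the projection formula, and generation from the relative Beilinson resolution of the diagonal (or, equivalently, the orthogonality argument reducing to Beilinson's theorem on the fibres). The proposal is correct; the only point worth polishing is that the $a$-th term of the diagonal resolution is $\CO(-a)\boxtimes\Omega^a_{\PP_X(\CE)/X}(a)$ rather than literally $\Lambda^a\CE(a)$, which does not affect the conclusion.
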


The following lemma allows to mutate objects from the rightmost component of~\eqref{eq:orlov_proj}
all the way to the left.

\begin{lemma}\label{lm:mutproj}
  Let $X$ be a smooth projective variety, and let $\CE$ be a vector bundle on $X$ of rank $n$.
  Let $p:\PP_X(\CE)\to X$ be the projectivization of $\CE$, and let $\CO(1)$ denote the
  Grothendieck line bundle.
  Let $\CA=\left(p^*D^b(X)\right)^\perp$ and let
  $\CF\in D^b(X)$. Then, up to a shift, $L_\CA(p^*\CF)\simeq p^*\left(\CF\otimes\det\CE^*\right)(-n)$.
\end{lemma}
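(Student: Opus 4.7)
The plan is to resolve $p^*\CF$ by the relative Koszul complex on $\PP_X(\CE)$ and then extract $L_\CA(p^*\CF)$ by truncating this resolution past the subcategory $\CA$.

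To construct the resolution I would start with the tautological inclusion $\CO(-1)\hookrightarrow p^*\CE$, which after tensoring with $\CO(1)$ becomes a nowhere vanishing section of the rank-$n$ bundle $p^*\CE(1)$. Its Koszul complex is therefore acyclic, and identifying $\Lambda^k(p^*\CE(1))^*\simeq p^*\Lambda^k\CE^*\otimes\CO(-k)$ one obtains an exact sequence
\begin{equation*}
  0\to p^*\det\CE^*\otimes\CO(-n)\to p^*\Lambda^{n-1}\CE^*\otimes\CO(-n+1)\to\cdots\to p^*\CE^*\otimes\CO(-1)\to\CO\to 0.
\end{equation*}
Tensoring with $p^*\CF$ gives an exact sequence $0\to A_n\to A_{n-1}\to\cdots\to A_1\to A_0\to 0$ with $A_0=p^*\CF$, $A_n=p^*(\CF\otimes\det\CE^*)(-n)$, and, crucially, $A_k=p^*(\CF\otimes\Lambda^k\CE^*)(-k)\in p^*D^b(X)(-k)\subset\CA$ for every $1\leq k\leq n-1$.

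Next, I would split the long exact sequence into short exact sequences $0\to K_k\to A_k\to K_{k-1}\to 0$, with $K_0=A_0$ and $K_{n-1}\simeq A_n$, and treat each as a distinguished triangle. Composing the connecting maps $A_0\to K_1[1]\to K_2[2]\to\cdots\to K_{n-1}[n-1]\simeq A_n[n-1]$ and applying the octahedral axiom inductively produces a distinguished triangle
\begin{equation*}
  Y''\to p^*\CF\to p^*(\CF\otimes\det\CE^*)(-n)[n-1]\to Y''[1],
\end{equation*}
in which $Y''$ is an iterated extension of the shifted terms $A_k[k-1]$ for $1\leq k\leq n-1$. Since $\CA$ is triangulated and contains each $A_k$, the object $Y''$ lies in $\CA$.

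It remains to recognise the cofibre as the $L_\CA$-mutation. Tensoring Orlov's decomposition by $\CO(-1)$ furnishes a second semiorthogonal decomposition $D^b(\PP_X(\CE))=\langle p^*D^b(X)(-n),\CA\rangle$, whence $\CA^\perp=p^*D^b(X)(-n)$. In particular $p^*(\CF\otimes\det\CE^*)(-n)[n-1]\in\CA^\perp$, so the displayed triangle has exactly the defining shape of the mutation triangle for $L_\CA$, and uniqueness yields $L_\CA(p^*\CF)\simeq p^*(\CF\otimes\det\CE^*)(-n)[n-1]$, which is the claimed isomorphism up to the shift $[n-1]$. The only real work is tracking the shifts through the iterated octahedral splicing, which is entirely formal.
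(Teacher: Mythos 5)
Your proof is correct and follows essentially the same route as the paper: the same Koszul complex of the tautological section of $p^*\CE(1)$, the same identification $\CA^\perp=p^*D^b(X)(-n)$ by twisting Orlov's decomposition, and the same middle object lying in $\CA$ (the paper takes the stupid truncation of the Koszul complex directly, which is exactly your iterated extension of the $A_k$'s built by octahedra). The shift $[n-1]$ you obtain matches the paper's triangle up to rotation, so there is nothing to add.
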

\begin{proof}
  Recall that $\CA=\langle p^*D^b(X)(-n+1), \ldots, p^*D^b(X)(-1)\rangle$. Applying the autoequivalence
  $-\otimes \CO(-1)$ to the decomposition~\eqref{eq:orlov_proj}, we see that $\CA^\perp=p^*D^b(X)(-n)$.
  Finally, consider the tautological embedding $\CO(-1)\to p^*\CE$. The corresponding section of
  $p^*\CE(1)$ is nowhere vanishing, thus, there is a Koszul complex
  \begin{equation}\label{eq:projkosz}
    0\to \Lambda^n \left(p^*\CE^*\right) (-n)\to \Lambda^{n-1}\left(p^*\CE^*\right) (-n+1)\to \cdots\to p^*\CE^*(-1)\to \CO\to 0.
  \end{equation}
  The complex~\eqref{eq:projkosz} induces an exact triangle in $D^b(\PP_X(\CE))$ of the form
  \begin{equation}\label{eq:ptojtri}
    p^*\det\CE^*(-n)\to X\to \CO_E[2-n]\xrightarrow{+1}\ ,
  \end{equation}
  where the object
  \begin{equation*}
    X=\left\{\cdots\to 0\to \Lambda^{n-1}\left(p^*\CE^*\right) (-n+1)\to \cdots\to p^*\CE^*(-1)\to 0
      \to \cdots\right\}
  \end{equation*}
  obviously belongs to the subcategory $\CA$.
  After the complex with $p^*\CF$, we get an exact triangle of the form
  \begin{equation*}
    p^*\left(\CF\otimes\det\CE^*\right)(-n) \to p^*\CF\otimes X 
    \to p^*\CF[2-n]\xrightarrow{+1}.
  \end{equation*}
  The subcategory $\CA$ is stable under tensor product with objects pulled
  back from $X$, thus $p^*\CF\otimes X\in \CA$ and the desired result follows.
\end{proof}

\begin{theorem}[Orlov's blow up formula, \cite{orlov1992projective}]
  Consider the commutative diagram
  \begin{equation*}
    \begin{tikzcd}
      E \rar{\iot} \dar{\pt} & \Xt \dar{p} \\
      Z \rar{\iota} & X
    \end{tikzcd}
  \end{equation*}
  where $X$ is a smooth projective variety, $Z$ is a smooth projective subvariety of $X$ of codimension $c$,
  $p:\Xt=\Bl_ZX\to X$ is the blow-up of $X$ in $Z$, and $E$ is the exceptional divisor.
  The derived category $D^b(X)$ admits the semiorthogonal decomposition 
  \begin{equation*}
    D^b(X)=\langle \iot_*\pt^*D^b(Z)((c-1)E),\ldots,\iot_*\pt^*D^b(Z)(E), p^*D^b(X)\rangle.
  \end{equation*}
\end{theorem}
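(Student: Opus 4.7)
The plan is to verify the three standard criteria defining a semiorthogonal decomposition of $D^b(\Xt)$: each proposed subcategory embeds fully faithfully, the components are pairwise semiorthogonal in the stated order, and together they generate. The essential geometric input is the identification $E = \PP_Z(\CN_{Z/X})$, which makes $\pt \colon E \to Z$ a $\PP^{c-1}$-bundle with $\CO_E(E) \simeq \CO_\pt(-1)$ (the dual of the Grothendieck line bundle). Every cohomological calculation ultimately reduces, via the projection formula, to the elementary vanishings $R\pt_*\CO_\pt(j) = 0$ for $-c+1 \leq j \leq -1$ together with $R\pt_*\CO_\pt = \CO_Z$.

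For full faithfulness of $p^*$, I would use $Rp_*\CO_\Xt \simeq \CO_X$ (proved via the Koszul-type triangle $\CO_\Xt(-E) \to \CO_\Xt \to \iot_*\CO_E$ on $\Xt$ combined with Orlov's projective bundle formula on $E$), the $(p^*, Rp_*)$ adjunction, and the projection formula. For each $\Phi_k := \iot_*\bigl(\pt^*(-) \otimes \CO_\Xt(kE)\bigr)$ with $1 \leq k \leq c-1$, I would employ $(\iot_*, \iot^!)$ with $\iot^! G = \iot^* G \otimes \CO_\pt(-1)[-1]$ for the divisor $E \subset \Xt$, together with the self-intersection triangle $\iot^*\iot_* G \simeq G \oplus G \otimes \CO_\pt(1)[1]$, to reduce
\begin{equation*}
  \RHom_\Xt(\Phi_k A_1, \Phi_k A_2) = \RHom_E\bigl(\pt^* A_1 \otimes \CO_\pt(-k),\; \iot^!\iot_*(\pt^* A_2 \otimes \CO_\pt(-k))\bigr)
\end{equation*}
to $\RHom_Z(A_1, A_2)$, the parasitic summand being killed by $R\pt_*\CO_\pt(-1) = 0$. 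Semiorthogonality between $\Phi_{k_1}$ and $\Phi_{k_2}$ with $1 \leq k_2 < k_1 \leq c-1$ proceeds identically, reducing to vanishing of $R\pt_*\CO_\pt(k_2 - k_1)$ and $R\pt_*\CO_\pt(k_2 - k_1 - 1)$, both in the forbidden range. Semiorthogonality of $\Phi_k(D^b(Z))$ against $p^* D^b(X)$ uses $\iot^* p^* = \pt^* \iota^*$ and the projection formula to reduce to $R\pt_*\CO_\pt(-k) = 0$ for $1 \leq k \leq c-1$.

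The main obstacle is generation. Let $\CA$ denote the right-hand side of the claimed decomposition; I would prove $\CA^\perp = 0$. Given $F \in \CA^\perp$, orthogonality to $p^* D^b(X)$ yields $Rp_* F = 0$, whence $F$ is set-theoretically supported on $E$ since $p$ is an isomorphism over $X \setminus Z$. Orthogonality to each $\Phi_k(D^b(Z))$ combined with the previous adjunction calculation yields $R\pt_*(\iot^* F \otimes \CO_\pt(j)) = 0$ for $j = 0, \ldots, c-2$. Orlov's projective bundle decomposition $D^b(E) = \langle \pt^* D^b(Z)(-c+1), \ldots, \pt^* D^b(Z)(-1), \pt^* D^b(Z) \rangle$ then forces the derived pullback $L\iot^* F$ to vanish in $D^b(E)$. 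Finally, the triangle $F(-E) \to F \to \iot_*(L\iot^* F)$ degenerates to $F \simeq F(-E)$, and an induction on the leftmost nonzero cohomology sheaf $H^d(F)$—which, being set-theoretically supported on $E$ and satisfying $\iot^* H^d(F) = 0$ by the edge of the $L\iot^*$ spectral sequence, must vanish by Nakayama applied to the identity $H^d(F) = I_E \cdot H^d(F)$—forces $F = 0$.
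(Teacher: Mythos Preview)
The paper does not prove this theorem at all; it is quoted verbatim from Orlov's original paper \cite{orlov1992projective} and used as a black box. So there is no ``paper's own proof'' to compare against, and your proposal is an independent attempt at a well-known result.

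Your outline is the standard one, and the full-faithfulness and semiorthogonality parts are essentially correct (though the self-intersection formula for a divisor is a \emph{triangle} $G\otimes\CO_\pt(1)[1]\to L\iot^*\iot_*G\to G$, not a direct sum; the computation still goes through). The generation argument, however, has a genuine gap. From orthogonality to the $\Phi_k$ for $k=1,\ldots,c-1$ you correctly extract
\[
R\pt_*\bigl(L\iot^*F\otimes\CO_\pt(j)\bigr)=0,\qquad j=0,\ldots,c-2,
\]
which is $c-1$ conditions. But the projective bundle decomposition of $D^b(E)$ has $c$ blocks, so these conditions only force $L\iot^*F$ into the single remaining block $\pt^*D^b(Z)\otimes\CO_\pt(-c+1)$; they do \emph{not} force $L\iot^*F=0$. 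The extra information $Rp_*F=0$ does not obviously supply the missing vanishing either: pushing the triangle $F(-E)\to F\to\iot_*L\iot^*F$ forward by $p$ and using $R\pt_*\CO_\pt(-c+1)=0$ gives no new constraint. Without $L\iot^*F=0$, the Nakayama step that follows never gets off the ground.

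To close the gap one typically argues differently: for instance, show that any $F$ with $Rp_*F=0$ lies in the subcategory generated by $\iot_*D^b(E)$ (via a d\'evissage on the scheme-theoretic thickening of $E$ supporting $F$), and then use the projective bundle formula on $E$ together with an explicit identification of $\iot_*\bigl(\pt^*D^b(Z)\bigr)$ inside $p^*D^b(X)$ to land everything in $\CA$. Orlov's original argument and the treatment in Huybrechts' book proceed along these lines rather than via $\CA^\perp=0$ directly.
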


\begin{lemma}\label{lm:blmut}
   Consider the commutative diagram
   \begin{equation*}
      \begin{tikzcd}
         E \rar{\iot} \dar{\pt} & \Xt \dar{p} \\
         Z \rar{\iota} & X
      \end{tikzcd}
   \end{equation*}
   where $X$ is a smooth projective variety, $Z$ is a smooth projective subvariety of $X$ of codimension $c$,
   $p:\Xt=\Bl_ZX\to X$ is the blow-up of $X$ in $Z$, and $E$ is the exceptional divisor,
   and let $\CN$ denote the normal bundle to $Z$ in $X$ as well as its pullback on $E$.
   Let $\CA$ be a full triangulated subcategory in $D^b(\Xt)$ such that
   \begin{equation*}
    \langle \iot_*\pt^*D^b(Z)((c-1)E),\ldots,\iot_*\pt^*D^b(Z)(E) \rangle \subseteq \CA,
   \end{equation*}
   and let $\CF$ be an object in $D^b(Z)$.
   Then $\iot_*\pt^*\CF$ belongs to $\CA$ if and only if
   $\iot_*\pt^*\left(\CF\otimes\det\CN^*\right)(cE)$ does.
\end{lemma}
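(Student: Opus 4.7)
The plan is to exhibit, for any $\CF \in D^b(Z)$, an acyclic complex of coherent sheaves on $\Xt$ whose outermost terms are $\iot_*\pt^*\CF$ and $\iot_*\pt^*(\CF \otimes \det\CN^*)(cE)$, and all of whose intermediate terms lie in
\begin{equation*}
   \CB := \langle \iot_*\pt^*D^b(Z)((c-1)E), \ldots, \iot_*\pt^*D^b(Z)(E)\rangle \subseteq \CA.
\end{equation*}
Breaking this complex into short exact sequences and chaining the resulting distinguished triangles, the triangulated category axioms together with the inclusion $\CB \subseteq \CA$ imply that $\iot_*\pt^*\CF$ and $\iot_*\pt^*(\CF \otimes \det\CN^*)(cE)$ differ only by an object of $\CA$, so one lies in $\CA$ if and only if the other does.

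To construct the complex, I would exploit the identification $E \simeq \PP_Z(\CN)$ of the exceptional divisor with a projective bundle of relative dimension $c - 1$. As in the proof of Lemma~\ref{lm:mutproj}, the tautological embedding $\CO_E(-1) \hookrightarrow \pt^*\CN$ gives a nowhere-vanishing section of $\pt^*\CN \otimes \CO_E(1)$, and hence a Koszul resolution
\begin{equation*}
   0 \to \pt^*\det\CN^* \otimes \CO_E(-c) \to \ldots \to \pt^*\CN^* \otimes \CO_E(-1) \to \CO_E \to 0
\end{equation*}
on $E$. Tensoring this with $\pt^*\CF$ and pushing forward along the exact functor $\iot_*$, and invoking the projection formula together with the standard identification $\iot^*\CO_\Xt(E) \simeq \CO_E(-1)$ (i.e., the normal bundle $\CN_{E/\Xt}$ is the tautological line bundle on $\PP_Z(\CN)$), produces the desired exact complex
\begin{equation*}
   0 \to \iot_*\pt^*(\CF \otimes \det\CN^*)(cE) \to \ldots \to \iot_*\pt^*(\CF \otimes \Lambda^i\CN^*)(iE) \to \ldots \to \iot_*\pt^*\CF \to 0
\end{equation*}
on $\Xt$. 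For each $1 \leq i \leq c - 1$, the intermediate term $\iot_*\pt^*(\CF \otimes \Lambda^i\CN^*)(iE)$ lies in $\iot_*\pt^*D^b(Z)(iE) \subseteq \CB$, as required.

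The formal wrap-up goes by induction on $c$. For $c = 1$ there are no intermediate terms and the complex degenerates to a single isomorphism $\iot_*\pt^*(\CF \otimes \det\CN^*)(E) \simeq \iot_*\pt^*\CF$. For $c \geq 2$, I would split off the short exact sequence $0 \to K \to A_1 \to A_0 \to 0$, where $A_0 = \iot_*\pt^*\CF$, $A_1 = \iot_*\pt^*(\CF \otimes \CN^*)(E) \in \CB$, and $K = \ker(A_1 \to A_0)$, and then apply the inductive hypothesis to the shorter acyclic complex whose outer terms are $\iot_*\pt^*(\CF \otimes \det\CN^*)(cE)$ and $K$. I expect the main subtlety to be not any deep argument but rather keeping track of the twist conventions to guarantee that the pushforward of the Koszul complex on $E$ takes exactly the form above, in particular that the exponents of $\CO_\Xt(iE)$ match the exterior powers of $\CN^*$ as they should.
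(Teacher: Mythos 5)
Your proposal is correct and follows essentially the same route as the paper: push forward the Koszul complex of the tautological section on $E\simeq\PP_Z(\CN)$, tensored with $\pt^*\CF$, using the projection formula and $\iot^*\CO(E)\simeq\CO_E(-1)$ to identify the outer terms, and observe that the intermediate terms lie in $\langle \iot_*\pt^*D^b(Z)((c-1)E),\ldots,\iot_*\pt^*D^b(Z)(E)\rangle\subseteq\CA$. The only cosmetic difference is that the paper packages all intermediate terms into a single stupidly truncated complex and one exact triangle, whereas you chain short exact sequences by induction on $c$.
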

\begin{proof}
  The proof is very similar to the proof of Lemma~\ref{lm:mutproj}.
  Recall that $E=\PP_Z(\CN_{X/Z})$, and that $\iot^*\CO(E)\simeq \CO(-1)$.
  The Koszul complex~\eqref{eq:projkosz} induces an exact triangle in $D^b(E)$ of the form
  \begin{equation}\label{eq:blow_tr1}
    \det\CN^*(-c)\to X\to \CO_E[2-c]\xrightarrow{+1}\ ,
  \end{equation}
  where $X$ is the complex Koszul complex stupidly truncated from both sides,
  \begin{equation*}
    X=\left\{\cdots\to 0\to \Lambda^{c-1}\CN^*(-c+1)\to \cdots\to \Lambda^2\CN^*(-2)\to \CN^*(-1)\to 0
      \to \cdots\right\}.
  \end{equation*}
  By projection formula
  $\iot_*\!\left(\pt^*\CF\otimes\Lambda^i\CN^*(-i)\right)\simeq\iot_*\pt^*\!\left(\CF\otimes\Lambda^i\CN^*\right)(iE)$.
  Applying the functor $\iot_*(\pt^*\CF\otimes-)$ to the triangle~\eqref{eq:blow_tr1}, we get
  and exact triangle in $D^b(\Xt)$ of the form
  \begin{equation*}
    \iot_*\pt^*(\CF\otimes\det\CN^*)(cE)\to \iot_*(\pt^*\CF\otimes X)\to
    \iot_*\pt^*\CF[2-c]\xrightarrow{+1}.
  \end{equation*}
  It remains to observe that $\iot_*(\pt^*\CF\otimes X)\in\langle
  \iot_*\pt^*D^b(Z)((c-1)E),\ldots,\iot_*\pt^*D^b(Z)(E) \rangle\subseteq\CA$.
\end{proof}

\subsection{Exceptional collections}

Let us now assume that $\CT$ is a $\kk$-linear triangulated category.
Then one can look for full triangulated subcategories of
simplest possible form, namely, equivalent to the derived category of a point.
These are in one to one
correspondence with the so called exceptional objects considered up to a shift.
Recall that for a general triangulated category
extension groups are defined as $\Ext^i(E, F)=\Hom(E, F[i])$.

\begin{definition}
  An object $E\in\CT$ is called \emph{exceptional} if
  \begin{equation*}
    \Ext^i(E, E) =
    \begin{cases}
      \kk, & \text{when } i=0, \\
      0,   & \text{otherwise}.
    \end{cases}
  \end{equation*}
\end{definition}
If $E\in\CT$ is exceptional, then the subcategory $\langle E\rangle\subseteq\CT$
is admissible and equivalent to the derived category of a point.
Exceptional objects are obviously stable under autoequivalences, in particular, the shift functor.

Similar to the notion of a semiorthogonal decomposition, there is a notion of an exceptional collection.

\begin{definition}
  A collection of objects $E_0,E_1,\ldots,E_n\in\CT$ is called \emph{exceptional} if for all $0\leq i \leq n$
  the objects $E_i$ are exceptional, and for all $0\leq i<j\leq n$ one has $\Ext^\bullet(E_j,E_i)=0$.
  An exceptional collection is called \emph{full} if $\CT$ is the smallest
  triangulated subcategory, containing all $E_i$, that is, $\CT=\langle E_0,E_1,\ldots,E_n\rangle$.
\end{definition}

Given an exceptional object $E$ and an arbitrary object $F$, one can check that $L_EF$
and $R_EF$ (we omit angular
brackets in subscripts when dealing with mutations through subcategories generated by a single exceptional
object) fit into exact triangles
\begin{equation*}
  L_EF[-1]\to \Hom^\bullet(E, F)\otimes E \to F \to L_EF,
  \qquad R_EF\to F\to \Hom^\bullet(F, E)^*\otimes E\to R_EF[1].
\end{equation*}
Given an admissible subcategory $\CA\subseteq\CT$ and an exceptional object
$E\in \CA^\perp$, the object $E'=R_\CA E$ is determined by the properties
\begin{equation*}
  E'\in \langle E, \CA\rangle,\quad E'\in \lort{\CA},\quad\text{and}\quad \Ext^\bullet(E',E)\simeq\kk
\end{equation*}
and is exceptional. A similar criterion can be written for an exceptional $E\in\lort{\CA}$
and $E'=L_\CA E$.

\subsection{Lefschetz decompositions}
The main reference for this section is~\cite{kuznetsov2008lefschetz}.
We return to the setting of a smooth projective algebraic variety $X$ equipped with a line bundle
$\CO(1)$.

\begin{definition}
  A \emph{rectangular Lefschetz decomposition} of $D^b(X)$ with respect to $\CO(1)$ is a semiorthogonal
  decomposition of the form
  \begin{equation*}
    D^b(X)=\langle \CA,\CA(1),\CA(2),\ldots,\CA(p-1)\rangle.
  \end{equation*}
  The subcategory $\CA(i-1)$ is called the \emph{$i$-th block} of the decomposition.
\end{definition}

In the best case scenario the first block of a rectangular Lefschetz decomposition 
is generated by an exceptional collection. Assume that $\CA=\langle E_0,E_1,\ldots,E_n\rangle$.
Then $D^b(X)$ admits a full exceptional collection
\begin{equation*}
  D^b(X)=
  \begin{pmatrix}
    E_n & E_n(1) & \cdots & E_n(p-1) \\
    \vdots & \vdots & \ddots & \vdots \\
    E_1 & E_1(1) & \cdots & E_1(p-1) \\
    E_0 & E_0(1) & \cdots & E_0(p-1)
  \end{pmatrix},
\end{equation*}
where the elements are ordered bottom to top, left to right.

Let us assume that the canonical line bundle $\omega_X\simeq \CO(-r)$ for some integer $r>0$,
which is called the \emph{index} of $X$.
Let $d$ denote the dimension of $X$. Then the maximal possible number of blocks in a Lefschetz
decomposition is $r$. Indeed, by Serre duality for any $F\in D^b(X)$ one has
\begin{equation*}
  \Ext^d(F(r), F)\simeq \Hom(F,F)^*\neq 0.
\end{equation*}

\begin{definition}
  Let $\langle E_0,E_1,\ldots,E_n\rangle=\CA\subseteq D^b(X)$ be an exceptional collection.
  We will say that it is a basis of a rectangular Lefschetz exceptional collection
  if there is a semiorthogonal decomposition
  \begin{equation*}
    \langle\CA, \CA(1),\ldots, \CA(r-1)\rangle\subseteq D^b(X),
  \end{equation*}
  where $r$ is the index of $X$.
\end{definition}

\begin{lemma}
  \label{lm:basis}
  A collection of objects $E_0, E_1, \ldots, E_n \in D^b(X)$ is a Lefschetz basis
  if and only if for all $0\leq i \leq j \leq n$ and all $0\leq t < r$
  \begin{equation*}
    \Ext^k(E_j(t), E_i) =
    \begin{cases}
      \kk, & \text{if}\ i = j,\ t = 0,\ \text{and}\ k = 0, \\
      0 & \text{otherwise.}
    \end{cases}
  \end{equation*}
\end{lemma}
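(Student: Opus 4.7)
The Lefschetz basis condition asks that $\CA = \langle E_0, E_1, \ldots, E_n\rangle$ be admissible and that $\langle \CA, \CA(1), \ldots, \CA(r-1)\rangle$ form a semiorthogonal sequence inside $D^b(X)$. The plan is to split the stated Ext-vanishing hypothesis into two parts: the $t = 0$ cases assemble into the statement that $E_0, \ldots, E_n$ is an exceptional collection generating $\CA$, while the $0 < t < r$ cases (combined with Serre duality) assemble into the pairwise semiorthogonality of the blocks.

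First, the case $i = j$, $t = 0$ of the hypothesis is the definition of an exceptional object, and the case $i < j$, $t = 0$ is the semiorthogonality condition for an exceptional collection. Together they say exactly that $E_0, \ldots, E_n$ form an exceptional collection, so $\CA$ is admissible and of the right form for a Lefschetz basis.

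The remaining requirement is that $\Hom(\CA(t), \CA(s)) = 0$ for $0 \leq s < t \leq r-1$, or equivalently, after twisting,
\begin{equation*}
  \Ext^\bullet(E_b(\tau), E_a) = 0 \quad \text{for all } 0 \leq a, b \leq n \text{ and } 0 < \tau < r.
\end{equation*}
When $a \leq b$ this is precisely the hypothesis. When $a > b$, Serre duality on $X$, using $\omega_X \simeq \CO(-r)$ and $d = \dim X$, yields
\begin{equation*}
  \Ext^k(E_b(\tau), E_a) \simeq \Ext^{d-k}(E_a(r - \tau), E_b)^*.
\end{equation*}
Here $b < a$ and $0 < r - \tau < r$, so the right-hand side falls into the range covered by the hypothesis and vanishes. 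This dualization step is the only nontrivial maneuver, and it is precisely what allows the hypothesis to be restricted to $i \leq j$ and $t < r$.

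The converse direction is immediate: if $E_0, \ldots, E_n$ is a Lefschetz basis, then the exceptionality of each $E_i$, the vanishing $\Ext^\bullet(E_j, E_i) = 0$ for $i < j$ inside $\CA$, and the semiorthogonality $\Hom(\CA(t), \CA) = 0$ for $0 < t < r$ together give every case listed in the lemma. There is no genuine obstacle in the argument; the content is the organization of the conditions and the Serre-duality symmetry exchanging $(i, j, t)$ with $(j, i, r-t)$.
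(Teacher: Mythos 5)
Your proof is correct and follows essentially the same route as the paper: the cases $i\leq j$ are the hypothesis verbatim, and the remaining semiorthogonality conditions $\Ext^\bullet(E_i(t),E_j)=0$ for $i<j$, $0<t<r$ are recovered by Serre duality with $\omega_X\simeq\CO(-r)$, exchanging $(i,j,t)$ with $(j,i,r-t)$. The paper's proof is a one-line version of exactly this argument.
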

\begin{proof}
  The remaining conditions $\Ext^\bullet(E_i(t), E_j)=0$ for $0\leq i < j\leq n$ and
  $0 < t < r$ follow immediately from Serre duality.
\end{proof}

\section{Odd isotropic Grassmannians}

\subsection{Definition}\label{ssec:def}

Let $V$ be a $(2n+1)$-dimensional vector space and let $\omega\in\Lambda^2V^*$ be
a skew-symmetric form of maximal rank. We denote by $K$ the one-dimensional kernel of $\omega$
and fix a non-zero vector $v\in K$. Given a pair $(V,\omega)$, one can develop the theory
of isotropic Grassmannians and, more generally, flag varieties, which is parallel
to the (even) symplectic case (see~\cite{mihai2007odd}).

We will use a couple of alternative descriptions of the variety $X=\IGr(k, V)$, parametrizing
$k$-dimensional subspaces isotropic with respect to $\omega$. Arguably, the most natural way
to define $X$ is to realize it as the zero set of a regular section of a vector bundle on a usual Grassmannian
variety. Let $\CU$ denote the tautological bundle of subspaces on the Grassmannian $\Gr(k, V)$.
Then $\omega\in \Lambda^2 V^* \simeq \Gamma(\Gr(k,V),\,\Lambda^2\CU^*)$ is a regular section and,
as in the even case, defines a closed embedding $\IGr(k, V)\hookrightarrow \Gr(k, V)$.
For instance, one has the universal sequence of vector bundles
\begin{equation*}
  0\to \CU\to V\otimes\CO_X\to \CQ\to 0,
\end{equation*}
were we denote the pullback of $\CU$ and $\CQ$ from $\Gr(k, V)$ by the same letters.

Alternatively, one can pick a $(2n+2)$-dimensional symplectic vector space $(V',\omega')$,
such that $V\subset V'$, and $\omega=\omega'|_V$. It is easy to check that $X$ embeds into
$\IGr(k, V')$ as the zero set of a regular section $s\in V'^*\simeq \Gamma(\IGr(k, V'),\, \CU^*)$,
where $s\in V^\perp\subset V'^*$ is any nonzero linear function.
Actually, one obtains a fiber square
\begin{equation*}
  \begin{tikzcd}[column sep=small]
     \IGr(k, V) \arrow[r, hook] \arrow[d, hook] & \IGr(k, V') \arrow[d, hook] \\
     \Gr(k, V) \arrow[r, hook] & \Gr(k, V')
  \end{tikzcd}
\end{equation*}
where both vertical arrows are induced by a regular section of $\Lambda^2\CU^*$, while
both horizontal arrows are induced by a regular section of $\CU^*$.

Let us denote by $j$ the embedding $X\hookrightarrow \IGr(k, V')$. We get a Koszul resolution
\begin{equation}\label{eq:koszul_schu}
   0 \to \Lambda^k \CU \to \cdots \to \Lambda^2 \CU \to \CU \to \CO_{\IGr(k, V')} \to j_*\CO_X \to 0.
\end{equation}

The odd isotropic Grassmannian $X$ carries a natural action of the non-reductive group of
transformations of $V$ preserving $\omega$ and is quasi-homogeneous with respect to this action.
There are only two orbits, and the closed one, denoted by $Z$, is defined by the condition that
the isotropic subspace contains $K$. It is a smooth subvariety of codimension $(2n-2k+2)$.

\subsection{A geometric construction}

Let $\bar{V}$ denote the quotient $V/K$, and let $\pi:V\to \bar{V}$ be the projection map.
The form $\omega$ descends to a symplectic form on $\bar{V}$.
Denote by $\Xb$ the even isotropic Grassmannian $\IGr(k, \bar{V})$. There is a natural incidence
subvariety in $\Xt\subset\IGr(k, V)\times\IGr(k, \bar{V})$, given by the condition
\begin{equation*}
  \Xt=\left\{ (U, U') \mid U\subset \pi^{-1}(U')\right\}.
\end{equation*}
We do not emphasize this point of view, but in the case $k=n$ the variety
$\Xt$ is actually the odd isotropic flag variety $\IFl(n, n+1; V)$.

Consider the diagram
\begin{equation*}
  \begin{tikzcd}[column sep=small]
    & \Xt \arrow[ld, "p"] \arrow[rd, "q"] & \\
    X & & \Xb
  \end{tikzcd}
\end{equation*}
It was shown in~\cite{mihai2007odd} that $\Xt$ on the one hand identifies with the relative
Grassmannian $\Gr_{\Xb}(k, \CUb\oplus\CO)$, so that $q$ is the natural projection, and, on the other
hand, the map $p$ is the blow-up of $Z$ in $X$.

Let us denote by $E$ the exceptional divisor of the blow-up $p$.
We get a larger diagram
\begin{equation}\label{eq:37diag}
   \begin{tikzcd}[column sep=small]
      & E \arrow[dl, "\pt"'] \arrow[rr, "\iot"] & & \Xt \arrow[dl, "p"'] \arrow[dr, "q"] & \\
      Z \arrow[rr, "\iot", hook] & & X & & \Xb
   \end{tikzcd}
\end{equation}

Denote by $\CUb$ the tautological subbundle on $\Xb$, as well as its pullback on $\Xt$.
Denote by $\CUt$ the inverse image of $\CUb$ under the projection of vector bundles
$V\otimes\CO_{\Xb}\to\bar{V}\otimes\CO_{\Xb}$, as well as its pullback on $\Xt$.
There is a short exact sequence
\begin{equation}\label{eq:ut}
   0\to \CO\to \CUt\to \CUb\to 0,
\end{equation}
which actually splits non-canonically (one has to choose a form-preserving
section of the map $\pi:V\to\bar{V}$).

Recall that $Z$ is the subvariety parametrizing the isotropic subspaces $U\subset V$ containing $K$.
Taking the quotient establishes an isomorphism $Z\simeq \IGr(k-1, \bar{V})$. In particular,
there is a sort exact sequence of vector bundles on $Z$
\begin{equation*}
  0\to \CO\to \CU\to \CW\to 0,
\end{equation*}
where $\CW$ is the tautological subbundle on $\IGr(k-1, \bar{V})$ (the sequence splits non-canonically).
In a similar fashion the exceptional divisor gets identified with $E\simeq \IFl(k-1, k; \bar{V})$,
and the universal isotropic flag is nothing but $\CW\subset\CUb$.

Let $\CO(H)$ and $\CO(\Hb)$ denote the very ample generators of $\Pic X$ and $\Pic \Xb$ respectively,
as well as their pullbacks on $\Xt$. Recall that $H\sim c_1(\CU^*)$, $\Hb\sim c_1(\CUb^*)$,
$K_X\sim -(2n-k+2)H$, and $K_\Xb\sim -(2n-k+2)\Hb$.
Computing the canonical class of $\Xt$ in two ways, one gets
\begin{equation*}
  E\sim H - \Hb.
\end{equation*}
Thus, there is a short exact sequence of vector bundles on $\Xt$
\begin{equation}\label{eq:uut}
   0 \to \CU\to \CUt \to \CO(E) \to 0,
\end{equation}
where the first map is the tautological embedding.

\subsection{Exact sequences}
In this section we construct several short exact sequences that will be useful later.
Let $X=\IGr(n,V)$, where $\dim V = 2n+1$. We denote by $\CQ$ the universal quotient bundle
on $X$, as well as its pullback on $\Xt$.

\begin{lemma}\label{lm:q}
   There is a short exact sequence of locally free sheaves on $\Xt$ of the form
   \begin{equation}
      \label{eq:q}
      0\to \CO(E) \to \CQ \to \CUb^* \to 0.
   \end{equation}
\end{lemma}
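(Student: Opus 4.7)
The idea is to realize the desired sequence as the quotient sequence associated with the chain of subbundles $\CU \subset \CUt \subset V\otimes\CO$ on $\Xt$.

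First, I would note that on $\Xt$ we have the pulled-back tautological sequence $0\to\CU\to V\otimes\CO\to\CQ\to 0$ from $X$, and the inclusion $\CU\hookrightarrow\CUt$ from~\eqref{eq:uut}. Moreover, by construction $\CUt$ is the preimage of $\CUb$ under the projection $V\otimes\CO\to\Vb\otimes\CO$, so $\CUt$ sits as a subbundle of $V\otimes\CO$ with quotient
\begin{equation*}
   V\otimes\CO/\CUt \;\simeq\; \Vb\otimes\CO/\CUb \;=\; \CQb,
\end{equation*}
where $\CQb$ denotes the universal quotient bundle on $\Xb$ (pulled back to $\Xt$).

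Next, the key algebraic identifications. On the one hand,~\eqref{eq:uut} gives $\CUt/\CU\simeq\CO(E)$. On the other hand, $\Xb=\IGr(n,\Vb)$ is the Lagrangian Grassmannian in the $2n$-dimensional symplectic space $\Vb$, so the symplectic form on $\Vb$ induces a canonical isomorphism $\CQb\simeq\CUb^*$. Combining these with the three-step filtration $\CU\subset\CUt\subset V\otimes\CO$, the quotient sequence
\begin{equation*}
   0\to \CUt/\CU \to V\otimes\CO/\CU \to V\otimes\CO/\CUt \to 0
\end{equation*}
becomes precisely $0\to\CO(E)\to\CQ\to\CUb^*\to 0$, as desired. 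A rank check ($1, n+1, n$) confirms consistency.

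There is no real obstacle; the only mild subtlety is being careful that the identification $V\otimes\CO/\CUt\simeq\CQb$ respects the maps coming from $p$ and $q$ in~\eqref{eq:37diag}, but this is immediate from the definition of $\CUt$ as $\pi^{-1}(\CUb)$. The content of the lemma is really that the subbundle $\CUt/\CU\hookrightarrow\CQ$ obtained from the geometry of $\Xt$ is cut out by $\CO(E)$, which is ensured by~\eqref{eq:uut}.
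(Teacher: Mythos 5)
Your proof is correct and follows essentially the same route as the paper: both arguments rest on the two identifications $\CUt/\CU\simeq\CO(E)$ from~\eqref{eq:uut} and $(V\otimes\CO)/\CUt\simeq\CUb^*$ (via the Lagrangian condition on $\CUb\subset\Vb$), the paper packaging them with the snake lemma applied to the two tautological sequences where you use the quotient sequence of the filtration $\CU\subset\CUt\subset V\otimes\CO$. These are equivalent formulations of the same argument.
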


\begin{proof}
  Consider the commutative diagram of vector bundles
  \begin{equation*}
    \begin{tikzcd}
      0 \arrow[r] & \CU \arrow[r] \arrow[d, "\phi"] & V \arrow[d, "id"] \arrow[r] & \CQ \arrow[d, "\psi"] \arrow[r] & 0 \\
      0 \arrow[r] & \CUt \arrow[r] & V \arrow[r] & \CUb^* \arrow[r] & 0
    \end{tikzcd}
  \end{equation*}
  where the bottom row is induced from the sequence $0\to \CUb \to \bar{V}\to \CUb^* \to 0$.
  It follows form the snake lemma that $\psi$ is an epimorphism,
  and $\Ker \psi\simeq \Coker \phi$, while we know from~\eqref{eq:uut}
  that the latter is isomorphic to~$\CO(E)$.
\end{proof}

\begin{lemma}\label{lm:uub}
   There is a short exact sequence of coherent sheaves on $\Xt$ of the form
   \begin{equation}\label{eq:uub}
      0\to \CU\to \CUb \to \iot_* \CO(E) \to 0.
   \end{equation}
\end{lemma}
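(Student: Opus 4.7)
The plan is to obtain the sequence from the two short exact sequences~\eqref{eq:ut} and~\eqref{eq:uut}, which share $\CUt$ as a common middle term, by an application of the Snake Lemma. The arrow $\CU\to\CUb$ in the statement will arise as the composition
$$
  \CU \hookrightarrow \CUt \twoheadrightarrow \CUb
$$
of the tautological embedding from~\eqref{eq:uut} with the projection from~\eqref{eq:ut}. The main preliminary observation is that the complementary composition
$$
  \sigma\colon \CO \hookrightarrow \CUt \twoheadrightarrow \CO(E),
$$
built from the remaining maps of the two sequences, is (up to a nonzero scalar) the canonical section of $\CO(E)$ cutting out $E$, so that one has an exact sequence $0 \to \CO \xrightarrow{\sigma} \CO(E) \to \iot_*\CO(E) \to 0$.

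To verify this, I would argue pointwise: at a point $(U,U')\in\Xt$ the fiber of $\CO\hookrightarrow\CUt$ is the chosen line $K\subset V$, while the fiber of $\CU\subset\CUt$ is $U\subset \pi^{-1}(U')$, so the fiber of $\sigma$ sends $v\in K$ to its class in $\pi^{-1}(U')/U$; this vanishes precisely when $K\subseteq U$. By the description of the blow-up locus recalled in Subsection~\ref{ssec:def}, this condition defines exactly~$E$. Since $\CO(E)$ is a line bundle and $\sigma$ is a nonzero global section, $\sigma$ is automatically injective; its zero divisor is $mE$ for some $m\geq 1$, and the relation $mE\sim E$ in $\Pic\Xt$ combined with the fact that $E = H-\Hb$ is a non-torsion element of $\Pic\Xt$ (which is finitely generated and free, arising from the blow-up / projective bundle structure) forces $m=1$.

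The remaining step is formal: applying the Snake Lemma to the commutative diagram with exact rows
$$
  \begin{CD}
    0 @>>> 0 @>>> \CO @= \CO @>>> 0 \\
    @. @VVV @VVV @VV\sigma V @. \\
    0 @>>> \CU @>>> \CUt @>>> \CO(E) @>>> 0
  \end{CD}
$$
whose middle vertical map is the embedding of~\eqref{eq:ut}, yields a six-term long exact sequence in which all three kernel terms vanish, so it collapses to the desired $0\to \CU \to \CUb \to \iot_*\CO(E) \to 0$. The only step that is not purely diagrammatic is the identification of $\sigma$ with the defining section of $E$, and in particular the multiplicity check of its zero divisor; I expect this to be the main obstacle, after which the rest of the proof is a mechanical diagram chase.
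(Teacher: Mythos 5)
Your proof is correct and takes essentially the same route as the paper: both arguments combine the two short exact sequences~\eqref{eq:ut} and~\eqref{eq:uut}, which share the middle term $\CUt$, and extract the desired sequence by the snake lemma applied to the resulting $3\times 3$ diagram. The only difference is that you spell out why the composite $\CO\hookrightarrow\CUt\twoheadrightarrow\CO(E)$ is a section vanishing exactly on $E$ with multiplicity one (via the fibrewise description and the fact that $E$ is non-torsion in $\Pic\Xt$), a point the paper's proof leaves implicit when it identifies the cokernel with $\iot_*\CO(E)$.
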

\begin{proof}
   Consider the morphism $\phi: \CU\to \CUb$ given by the composition of the
   inclusion $\CU\to \CUt$ with the projection $\alpha:\CUt \to \CUb$.
   Being a morphism of locally free sheaves which induces an isomorphism on fibers at
   the generic point, it is injective.
   Denote the cokernel of $\phi$ by $\CE$. Consider the diagram
   \begin{equation*}
      \begin{tikzcd}
         0 \rar & \CU \rar \dar{id} & \CUt \rar \dar{\alpha} & \CO(E)\rar \dar{\beta} & 0\\
         0 \rar & \CU \rar{\phi} & \CUb \rar & \CE \rar & 0
      \end{tikzcd}
   \end{equation*}
   It follows from the snake lemma that $\beta$ is surjective and
   $\Ker \beta \simeq \Ker \alpha \simeq \CO$. Thus, $\CE\simeq \iot_*\CO(E)$.
\end{proof}

The following lemma is trivial, however, we were not able to find it in the literature.
\begin{lemma}
  \label{lm:l2}
  Let $i:D\to X$ be an effective Cartier divisor on a scheme $X$. Let $\phi: \CE\to \CF$
  be an injective morphism of locally free sheaves of rank $r$ on $X$ such that
  the $\Coker\phi$ is isomorphic to $i_*\CL$ for a line bundle $\CL$ on $D$. Then
  $\Lambda^2\phi$ induces a short exact sequence
  \begin{equation*}
    0\to \Lambda^2\CE\to \Lambda^2\CF \to i_*(\mathcal{K}\otimes \CL)\to 0,
  \end{equation*}
  where $\mathcal{K}$ is the kernel of the morphism $i^*\CF\to \CL$.
  In particular, $\Coker \Lambda^2\phi$ is a rank $r-1$ vector bundle supported on $D$.
\end{lemma}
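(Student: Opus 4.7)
The proof is a direct exterior algebra computation; the only subtle step is establishing a convenient local normal form for $\phi$. Since the assertion is local on $X$ and $\Coker\phi \simeq i_*\CL$ with $\CL$ a line bundle on the Cartier divisor $D$, I would first show that near any point $x\in D$ there exist trivializations $\CE|_U\simeq \CO_U^r\simeq \CF|_U$ on an open $U\ni x$ in which $\phi$ is represented by $\mathrm{diag}(f,1,\ldots,1)$, where $f$ is a local equation of $D$. Indeed, the surjection $\CF_x\twoheadrightarrow \CL_x$ admits a lift $\tilde e_1\in \CF_x$ which is part of a basis by Nakayama's lemma; one completes to a basis $\tilde e_1,\tilde e_2,\ldots,\tilde e_r$ of $\CF$ on $U$ with $\tilde e_i\mapsto 0$ for $i\geq 2$, and then $f\tilde e_1,\tilde e_2,\ldots,\tilde e_r$ freely generate the image of $\phi$, yielding the normal form.

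Once this is in hand, injectivity of $\Lambda^2\phi$ is immediate, either from the local form (since $f$ is a non-zero-divisor) or abstractly: $\Lambda^2\phi$ is an isomorphism away from $D$, so its kernel is a torsion subsheaf of the locally free sheaf $\Lambda^2\CE$ and thus vanishes. In the local basis one computes that $\Lambda^2\phi$ sends $e_1\wedge e_j\mapsto f(\tilde e_1\wedge \tilde e_j)$ for $j\geq 2$ and $e_i\wedge e_j\mapsto \tilde e_i\wedge \tilde e_j$ for $2\leq i<j\leq r$, so the cokernel over $U$ is $\bigoplus_{j=2}^r (\CO_U/(f))\cdot\overline{\tilde e_1\wedge \tilde e_j}$, a free $\CO_D|_U$-module of rank $r-1$.

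It remains to identify this cokernel canonically with $i_*(\mathcal{K}\otimes\CL)$. I would produce a global comparison map as follows. The sequence $0\to \mathcal{K}\to i^*\CF\to \CL\to 0$ induces an exact sequence
\begin{equation*}
   0\to \Lambda^2\mathcal{K}\to \Lambda^2 i^*\CF\to \mathcal{K}\otimes\CL\to 0,
\end{equation*}
and composing the restriction $\Lambda^2\CF\to i_*\Lambda^2 i^*\CF$ with the quotient yields a map $\Lambda^2\CF\to i_*(\mathcal{K}\otimes\CL)$. Since $i^*\phi$ factors as $i^*\CE\twoheadrightarrow \mathcal{K}\hookrightarrow i^*\CF$, the map $\Lambda^2 i^*\phi$ factors through $\Lambda^2\mathcal{K}$, so the composition $\Lambda^2\CE\to i_*(\mathcal{K}\otimes\CL)$ vanishes and an induced map $\Coker(\Lambda^2\phi)\to i_*(\mathcal{K}\otimes\CL)$ is defined. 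Under the identifications that $\mathcal{K}$ is locally generated by $\tilde e_2|_D,\ldots,\tilde e_r|_D$ and $\CL$ by $\tilde e_1|_D$, checking on stalks with the normal form shows that this map is an isomorphism. There is no essential obstacle: the argument is elementary exterior algebra, and the only care needed is phrasing the cokernel identification in a global, basis-free manner via the natural comparison map.
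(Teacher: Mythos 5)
Your proof is correct and follows essentially the same route as the paper: reduce to the local normal form $\mathrm{diag}(f,1,\ldots,1)$ for $\phi$ (the paper gets there by Nakayama plus row/column operations, you by lifting a basis adapted to the surjection onto $\CL$ — the same idea), compute $\Coker\Lambda^2\phi$ in that basis, and identify it with $i_*(\mathcal{K}\otimes\CL)$ via the sequences $0\to\mathcal{K}\to i^*\CF\to\CL\to 0$ and $0\to\Lambda^2\mathcal{K}\to\Lambda^2 i^*\CF\to\mathcal{K}\otimes\CL\to 0$. If anything, your construction of the global comparison map $\Coker(\Lambda^2\phi)\to i_*(\mathcal{K}\otimes\CL)$ is spelled out more carefully than in the paper, which leaves that step to the reader.
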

\begin{proof}
  Let us first check that $\Coker \Lambda^2\phi$ sheme-theoretically is supported on $D$. The question
  is local; thus, we can reformulate it in the following manner. Let $(A, m)$ be a local
  ring and let
  \begin{equation}
    \label{eq:l2local}
    \begin{tikzcd}
      0 \rar & M \rar{\phi} & M' \rar & A/(f) \rar & 0
    \end{tikzcd}
  \end{equation}
  be a short exact sequence of $A$ modules, where $f\in m$ is a non-zero-divisor, and
  $E$ and $F$ are free $A$-modules of rank $r$. Tensoring with $A/(f)$ over $A$, we get
  a right exact sequence of free $A/(f)$ modules
  \begin{equation*}
    \begin{tikzcd}
      M/(f)M \rar{\tilde{\phi}} &  M'/(f)M \rar & A/(f) \rar & 0.
    \end{tikzcd}
  \end{equation*}
  One can choose
  appropriate generators $\tilde{m}_1,\tilde{m}_2,\ldots, \tilde{m}_r$ and $\tilde{m}'_1,\tilde{m}'_2,\ldots, \tilde{m}'_r$ for
  $M/(f)M$ and $M'/(f)M'$ respectively so that $\tilde{\phi}(\tilde{m}_1)=0$ and
  $\tilde{\phi}(\tilde{m}_i)=\tilde{m}'_{i}$ for $i=2,\ldots, r$. By Nakayama's lemma, the generators
  $\tilde{m}_i$ and $\tilde{m}'_j$ lift to generators $m_i$ and $m'_j$ of $M$ and $M'$ respectively.
  In terms of these generators $\phi$ is given by a matrix of the form
  \begin{equation*}
    F =
    \begin{pmatrix}
      fa_{11} & fa_{12} & \cdots & fa_{1r} \\
      fa_{21} & 1+fa_{22} & \cdots & fa_{2r} \\
      \vdots & \vdots & \ddots & \vdots \\
      fa_{r1} & fa_{r2} & \cdots & 1+fa_{rr}
    \end{pmatrix}.
  \end{equation*}
  Remark that the diagonal elements $F_{ii}$ are invertible for $i=2,\ldots,r$.
  Using elementary row and column transformations, we can find generators
  $e_1,\ldots,e_r$ and $e'_1,\ldots,e'_r$ of $M$ and $M'$ respectively, such that
  $\phi(e_1) = fe'_1$ and $\phi(e_i)=e'_i$ for $i=2,\ldots,r$. Then
  $(\Lambda^2\phi)(e_1\wedge e_i) = fe'_1\wedge e'_i$ for $i=2,\ldots,r$, and
  $(\Lambda^2\phi)(e_i\wedge e_j) = e'_i\wedge e'_j$ for $2\leq i < j\leq r$.
  Thus, $\Coker \Lambda^2\phi$ is isomorphic to $\left(A/(f)\right)^{\oplus (r-1)}$.

  Now that we know that $\Coker \Lambda^2\phi$ is supported on $D$, we easily deduce
  that it is isomorphic to $i_*(\mathcal{K}\otimes\CL)$ from the short exact sequences
  \begin{equation*}
    0\to \mathcal{K} \to i^*\CF \to \CL \to 0\quad\text{and}\quad
    0\to \Lambda^2\mathcal{K}\to i^*\Lambda^2\CF \to \mathcal{K}\otimes\CL\to 0.
    \qedhere
  \end{equation*}
\end{proof}

Using the previous lemma, we can deduce the following.
\begin{lemma}
   There are short exact sequences of coherent sheaves on $\Xt$ of the form
   \begin{align}
      \label{eq:ubue}
      0 \to \CUb \to \CU(E) \to \iot_* \CW(E) \to 0 &, \\
      \label{eq:l2uub}
      0 \to \Lambda^2\CU \to \Lambda^2\CUb \to \iot_* \CW(E) \to 0 &.
   \end{align}
\end{lemma}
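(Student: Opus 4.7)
The plan is to derive both sequences from the fundamental sequence~\eqref{eq:uub} together with a single geometric observation: on the exceptional divisor $E \simeq \IFl(n-1, n; \bar V)$, the quotient $\CUb|_E / \CW$ is isomorphic to $\CO(E)|_E$. This follows by comparing Lemma~\ref{lm:uub} with the geometric description of the image of $\CU|_E \to \CUb|_E$: on $E$ the fiber of $\CU$ is an extension of $\CW$ by the line $\langle v\rangle$, so the image in $\CUb|_E$ is $\CW$, which forces the cokernel in~\eqref{eq:uub} to be $\CUb|_E/\CW$.

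For~\eqref{eq:l2uub} I would simply apply Lemma~\ref{lm:l2} to~\eqref{eq:uub}. The morphism $\iot^*\CUb \to \CO(E)|_E$ appearing in that lemma is precisely the quotient $\CUb|_E \to \CUb|_E/\CW$, so the kernel $\mathcal{K}$ is $\CW$ and the lemma produces the cokernel $\iot_*(\CW \otimes \CO(E)|_E) = \iot_*\CW(E)$.

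For~\eqref{eq:ubue} I would use the nine-lemma. The first step is a local verification that $\CUb(-E) \subseteq \CU$ as subsheaves of $\CUb$: in the local trivialization used in the proof of Lemma~\ref{lm:l2}, $\CUb$ has a basis $e_1',\ldots,e_n'$ in which $\CU$ is generated by $fe_1', e_2',\ldots,e_n'$ for a local equation $f$ of $E$, so $\CUb(-E) = f\CUb$ is visibly contained in $\CU$. Consequently, the canonical inclusion $\CUb(-E) \hookrightarrow \CUb$ factors through $\CU$. Now I would consider the commutative diagram
\begin{equation*}
\begin{tikzcd}[column sep=small, row sep=small]
0 \arrow[r] & \CUb(-E) \arrow[r] \arrow[d, equal] & \CU \arrow[r] \arrow[d, hook] & \iot_*\CW \arrow[r] \arrow[d, hook] & 0 \\
0 \arrow[r] & \CUb(-E) \arrow[r] \arrow[d] & \CUb \arrow[r] \arrow[d, two heads] & \iot_*\CUb|_E \arrow[r] \arrow[d, two heads] & 0 \\
0 \arrow[r] & 0 \arrow[r] & \iot_*\CO(E) \arrow[r, equal] & \iot_*\CO(E) \arrow[r] & 0
\end{tikzcd}
\end{equation*}
whose middle row is the tautological sequence $0 \to \CUb(-E) \to \CUb \to \iot_*\CUb|_E \to 0$ for the Cartier divisor $E$ tensored with $\CUb$, whose middle column is~\eqref{eq:uub}, and whose rightmost column is the push-forward under $\iot_*$ of the universal flag sequence $0 \to \CW \to \CUb|_E \to \CO(E)|_E \to 0$ on $E$. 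Commutativity is straightforward once one notes that the map $\CUb \to \iot_*\CO(E)$ of~\eqref{eq:uub} factors through the adjunction unit $\CUb \to \iot_*\CUb|_E$. All three columns, the middle row, and the bottom row are exact, so the nine-lemma yields exactness of the top row $0 \to \CUb(-E) \to \CU \to \iot_*\CW \to 0$. Twisting by $\CO(E)$ then gives~\eqref{eq:ubue}.

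The main technical point is the local check that $\CUb(-E) \subseteq \CU$; everything else is routine diagram chasing.
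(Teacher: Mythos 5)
Your argument is correct, and for the second sequence~\eqref{eq:l2uub} it coincides with the paper's: both apply Lemma~\ref{lm:l2} to~\eqref{eq:uub} and identify the kernel $\mathcal{K}=\Ker(\iot^*\CUb\to\CO(E)|_E)$ with the universal subbundle $\CW$ on $E\simeq\IFl(k-1,k;\bar{V})$. Where you genuinely diverge is in the proof of~\eqref{eq:ubue}. The paper deduces it \emph{from}~\eqref{eq:l2uub}: it compares the two filtrations of $\Lambda^2\CUt$ coming from~\eqref{eq:ut} and~\eqref{eq:uut}, which give $0\to\CUb\to\Lambda^2\CUt\to\Lambda^2\CUb\to0$ and $0\to\Lambda^2\CU\to\Lambda^2\CUt\to\CU(E)\to0$, and then runs the snake lemma to identify the kernel and cokernel of the induced map $\CU(E)\to\iot_*\CW(E)$. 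You instead observe that $\CUb(-E)\subseteq\CU\subseteq\CUb$ and read off $\CU/\CUb(-E)\simeq\Ker\bigl(\iot_*\CUb|_E\to\iot_*\CO(E)\bigr)=\iot_*\CW$ via the nine lemma, then twist by $\CO(E)$. This is more elementary and makes~\eqref{eq:ubue} independent of~\eqref{eq:l2uub} and of any exterior-power bookkeeping; the paper's route has the mild advantage of producing both sequences from a single diagram. One small streamlining of your argument: the inclusion $\CUb(-E)\subseteq\CU$ does not really need the local basis computation from Lemma~\ref{lm:l2} --- since $\Coker(\CU\to\CUb)\simeq\iot_*\CO(E)$ is an $\CO_E$-module, it is annihilated by the ideal sheaf $\CO(-E)$, so $\CO(-E)\cdot\CUb$ maps to zero in the cokernel and hence lands in the image of $\CU$.
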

\begin{proof}
  Once we apply Lemma~\ref{lm:l2} to~\eqref{eq:uub}, we see that there is a commutative
  diagram of the form
  \begin{equation}
    \begin{tikzcd}
        & & 0 \dar & & \\
        & 0 \dar \rar & \CUb \dar \rar & \Ker\phi \dar & \\
        0 \rar & \Lambda^2\CU \rar \dar{id} & \Lambda^2\CUt \rar \dar & \CU(E) \rar \dar{\phi} & 0 \\
        0 \rar & \Lambda^2\CU \rar \dar & \Lambda^2\CUb \rar \dar & \iot_* W(E) \rar \dar & 0 \\
        & 0 \rar & 0 \rar & \Coker\phi &
    \end{tikzcd}
  \end{equation}
  It follows from the snake lemma that $\Ker\phi\simeq \CUb$, and $\Coker\phi = 0$.
\end{proof}

\section{Derived category of $\IGr(3, 7)$}\label{sec:37}

\subsection{Vanishing lemmas}

In this section we are going to prove several statements that hold
for a general submaximal odd isotropic Grassmannian $X=\IGr(n, 2n+1)$.
We begin with a vanishing result for even isotropic Grassmannians.

\begin{lemma}\label{lm:main_even}
   Let $(\lambda_1,\lambda_2,\ldots,\lambda_n)\in P^+_n$ be a dominant weight
   of the group $\GL_n$ such that
   \begin{enumerate}[label=\emph{(\roman*)}, ref=(\roman*)]
      \item \label{lm:main_even:c1} $\lambda_n < 0$,
      \item \label{lm:main_even:c2} $\lambda_1 \geq -(n+2)$,
      \item \label{lm:main_even:c3} $\lambda_i-\lambda_{i+1} \leq 3$ for $i=1,\ldots,n-1$.
   \end{enumerate}
   Then the bundle $\Sigma^\lambda\CU^*$ on $\IGr(n, 2n+2)$ is acyclic.
\end{lemma}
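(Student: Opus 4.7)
The plan is to deduce acyclicity from the Borel--Bott--Weil theorem for the symplectic group: for $\IGr(n, 2n+2)$ the cohomology of $\Sigma^\lambda\CU^*$ vanishes as soon as the sequence
\begin{equation*}
  \alpha = \bigl((n+1)+\lambda_1,\; n+\lambda_2,\; \ldots,\; 2+\lambda_n,\; 1\bigr)
\end{equation*}
is singular for the $C_{n+1}$ Weyl action, meaning either two entries have equal absolute value or some entry is zero (this is the criterion captured by Proposition~\ref{thm:bbwc}, supplemented with the short-root wall $x_i = 0$ that also kills cohomology). The three hypotheses translate into the numerical constraints $\alpha_n \leq 1$, $\alpha_1 \geq -1$, and $1 \leq \alpha_i - \alpha_{i+1} \leq 4$, coming respectively from (i), (ii), and (iii) together with dominance of $\lambda$, while the tail entry $\alpha_{n+1} = 1$ is fixed by the geometry.

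The rest of the argument is a short case analysis on the sign pattern of the strictly decreasing prefix $\alpha_1 > \alpha_2 > \cdots > \alpha_n$. If it is entirely non-negative, then $\alpha_n \in \{0, 1\}$, which is immediately singular (either $\alpha_n = 0$, or $\alpha_n = \alpha_{n+1} = 1$). Symmetrically, if it is entirely negative, then $\alpha_1 = -1$, so $|\alpha_1| = 1 = |\alpha_{n+1}|$. Otherwise there is an index $i_0$ with $\alpha_{i_0} \geq 0 > \alpha_{i_0+1}$; if $\alpha_{i_0} = 0$ we are done, and if not, then $\alpha_{i_0}$ and $|\alpha_{i_0+1}|$ are positive integers with $\alpha_{i_0} + |\alpha_{i_0+1}| \leq 4$. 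This forces the pair $(\alpha_{i_0}, |\alpha_{i_0+1}|)$ to lie in $\{(1,1), (1,2), (1,3), (2,1), (2,2), (3,1)\}$, and in each of these either the two absolute values already coincide, or one of them equals $1 = |\alpha_{n+1}|$.

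There is no deep obstacle: the whole argument is an arithmetic bookkeeping exercise. The one point worth flagging is that the gap bound $\leq 4$ coming from (iii) is sharp: allowing jumps of $5$ would admit ``bad'' configurations such as $\alpha_{i_0} = 3,\ \alpha_{i_0+1} = -2$ that skip past all small absolute values and leave $\alpha$ regular, so hypothesis (iii) is used essentially. Hypotheses (i) and (ii) are used only to pin $\alpha_1$ and $\alpha_n$ close enough to $0$ for the case analysis to have a chance to start.
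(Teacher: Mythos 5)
Your proof is correct and takes essentially the same route as the paper: apply the Borel--Bott--Weil criterion to the $\rho$-shifted sequence $\alpha$ with reserved tail entry $1$, use hypotheses (i) and (ii) to force a sign change inside the strictly decreasing prefix, and use the gap bound $\alpha_{i}-\alpha_{i+1}\leq 4$ from (iii) to produce a repeated absolute value (or a zero) at the transition index. The paper organizes this as a contradiction, squeezing $\gamma_j=2$ and $\gamma_{j+1}=-2$ at the transition rather than enumerating the six small pairs, but the content is identical.
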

\begin{proof}
   Assume that the bundle $\Sigma^\lambda \CU^*$ is not acyclic.
   Then, according to Theorem~\ref{thm:bbwc}, the absolute values of the elements in the sequence
   \begin{equation}\label{eq:lm_even:bbw}
       \gamma = (n+1 + \lambda_1, n + \lambda_2,\ldots, 2 + \lambda_n, 1)
   \end{equation}
   are strictly positive and distinct.
   In particular,
   \begin{equation}\label{eq:lm_even:abs}
      \gamma_i \geq 2 \quad \text{or} \quad \gamma_i \leq -2
   \end{equation}
   for all $i=1,\ldots,n$.
   It follows from condition~\ref{lm:main_even:c1} that $\gamma_n=2+\lambda_n \leq 1$,
   while condition~\ref{lm:main_even:c2} implies that $\gamma_1=n+1+\lambda_1\geq -1$.
   Combining the latter inequalities with~\eqref{eq:lm_even:abs}, we conclude that
   \begin{equation}
      \gamma_1 \geq 2 \quad \text{and} \quad \gamma_n \leq -2. 
   \end{equation}
   
   As the first $n$ elements in the sequence~\eqref{eq:lm_even:bbw} are strictly
   decreasing, there exists an index
   $1\leq j\leq n-1$ such that
   \begin{equation}\label{eq:lm_even:gap}
      \gamma_j = n+2-j+\lambda_j \geq 2 \quad \text{and} \quad \gamma_{j+1} = n+2-(j+1)+\lambda_{j+1}\leq -2.
   \end{equation}
   Thus, $\lambda_j-\lambda_{j+1}\geq 3$. From condition~\ref{lm:main_even:c3} we
   conclude that $\lambda_j-\lambda_{j+1}=3$, which implies that the inequalities~\eqref{eq:lm_even:gap}
   are, in fact, equalities. Finally, in such case $\abs{\gamma_j}=\abs{\gamma_{j+1}}$, which contradicts
   the assumption that all the absolute values $\abs{\gamma_i}$ for $i=1,\ldots,n+1$ are distinct.
\end{proof}

From the previous lemma we deduce a vanishing result for odd isotropic Grassmannians.

\begin{lemma}\label{lm:main}
   Let $(\lambda_1,\lambda_2,\ldots,\lambda_n)\in P^+_n$ be a dominant weight
   of the group $\GL_n$ such that
   \begin{enumerate}[label=\emph{(\roman*)}, ref=(\roman*)]
      \item \label{lm:main:c1} $\lambda_n < 0$,
      \item \label{lm:main:c2} $\lambda_1 \geq -(n+1)$,
      \item \label{lm:main:c3} $\lambda_i-\lambda_{i+1} \leq 2$ for $i=1,\ldots,n-1$.
   \end{enumerate}
   Then the bundle $\Sigma^\lambda\CU^*$ on $X=\IGr(n, 2n+1)$ is acyclic.
\end{lemma}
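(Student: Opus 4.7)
The plan is to reduce the claim on the odd isotropic Grassmannian $X = \IGr(n, 2n+1)$ to Lemma~\ref{lm:main_even} on the even isotropic Grassmannian $Y = \IGr(n, 2n+2)$. The embedding $j \colon X \hookrightarrow Y$ described in Section~\ref{ssec:def} realizes $X$ as the zero locus of a regular section of $\CU^*|_Y$, and gives rise to the Koszul resolution~\eqref{eq:koszul_schu}
\begin{equation*}
  0 \to \Lambda^n \CU \to \cdots \to \Lambda^2 \CU \to \CU \to \CO_Y \to j_*\CO_X \to 0.
\end{equation*}
Tensoring with the locally free sheaf $\Sigma^\lambda \CU^*$ on $Y$ and applying the projection formula yields a resolution of $j_*(\Sigma^\lambda \CU^*|_X)$ whose terms are $\Sigma^\lambda \CU^* \otimes \Lambda^k \CU$ for $0 \leq k \leq n$. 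Since $j$ is a closed embedding, $H^\bullet(X, \Sigma^\lambda \CU^*) = H^\bullet(Y, j_*(\Sigma^\lambda \CU^*|_X))$, so the hypercohomology spectral sequence of this resolution reduces the lemma to the acyclicity of each twist $\Sigma^\lambda \CU^* \otimes \Lambda^k \CU$ on $Y$.

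Next I would decompose these twists into irreducible equivariant bundles. Using the isomorphism $\Lambda^k \CU \simeq \det \CU \otimes \Lambda^{n-k} \CU^*$ together with Pieri's formula (Lemma~\ref{lm:pieri}), one obtains
\begin{equation*}
  \Sigma^\lambda \CU^* \otimes \Lambda^k \CU \;\simeq\; \bigoplus_\mu \Sigma^\mu \CU^*,
\end{equation*}
where the sum ranges over dominant weights $\mu \in P^+_n$ with $\mu_i \in \{\lambda_i, \lambda_i - 1\}$ for every $i$ and $\mu_i = \lambda_i - 1$ for exactly $k$ indices. I would then verify that every such $\mu$ satisfies the hypotheses of Lemma~\ref{lm:main_even}: condition~(i) is immediate from $\mu_n \leq \lambda_n < 0$; condition~(ii) follows from $\mu_1 \geq \lambda_1 - 1 \geq -(n+1) - 1 = -(n+2)$; and condition~(iii) follows from
\begin{equation*}
  \mu_i - \mu_{i+1} \;\leq\; \lambda_i - (\lambda_{i+1} - 1) \;\leq\; 3
\end{equation*}
for $i = 1, \ldots, n-1$. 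Thus every summand is acyclic on $Y$ by Lemma~\ref{lm:main_even}, and the lemma follows.

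There is no serious obstacle: the hypotheses of the present lemma were tailored so that the unit of slack introduced in each coordinate by Pieri's rule is exactly absorbed by the weaker bounds in~(ii) and~(iii) of Lemma~\ref{lm:main_even}. The one step meriting care is the Pieri decomposition above, which is cleanest after either the identification $\Lambda^k \CU \simeq \det \CU \otimes \Lambda^{n-k} \CU^*$ or the duality $\Sigma^\lambda \CU^* \simeq \Sigma^{-\lambda} \CU$; once this bookkeeping is in place, the rest is automatic.
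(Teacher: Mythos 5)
Your proof is correct and follows essentially the same route as the paper: restrict along the embedding $j\colon \IGr(n,2n+1)\hookrightarrow \IGr(n,2n+2)$, use the Koszul resolution~\eqref{eq:koszul_schu} to reduce to the acyclicity of $\Sigma^\lambda\CU^*\otimes\Lambda^k\CU$ on the even Grassmannian, and check via Pieri that every irreducible summand $\Sigma^\mu\CU^*$ (with $\lambda_i-1\leq\mu_i\leq\lambda_i$) satisfies the hypotheses of Lemma~\ref{lm:main_even}. Your write-up is in fact slightly more explicit than the paper's about the Pieri bookkeeping.
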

\begin{proof}
   Choose an embedding $j : X \hookrightarrow Y=\IGr(2, 2n+2)$
   as the zero scheme of a regular section $s\in \Gamma(Y,\,\CU^*)$
   (see Section~\ref{ssec:def}). Then
   \begin{equation*}
      H^\bullet(X,\, \Sigma^\lambda \CU^*) \simeq H^\bullet(X,\, j^*\Sigma^\lambda \CU^*) \simeq
      H^\bullet(Y,\, j_*j^*\Sigma^\lambda \CU^*) \simeq H^\bullet(Y,\, \Sigma^\lambda \CU^*\otimes j_*\CO_X).
   \end{equation*}
   Replacing $j_*\CO_X$ by its Koszul resolution~\eqref{eq:koszul_schu} and looking at the corresponding
   spectral sequence, we see that it is enough to show that the bundles
   $\Sigma^\lambda \CU^*\otimes \Lambda^p \CU$ on $Y$ are acyclic for all $p=0,\ldots,n$.

   Let $\Sigma^\alpha \CU^* \subset \Sigma^\lambda \CU^* \otimes \Lambda^p \CU$ be an irreducible
   summand.
   It follows from Lemma~\ref{lm:pieri} that
   \begin{equation}\label{eq:lm_main:summand}
      \lambda_i-1 \leq \alpha_i \leq \lambda_i
   \end{equation}
   for all $i=1,\ldots,n$. Combining~\eqref{eq:lm_main:summand} with conditions \ref{lm:main:c1},
   \ref{lm:main:c2}, and \ref{lm:main:c3}, we conclude that $\alpha$ satisfies
   the conditions listed in Lemma~\ref{lm:main}; thus follows the required vanishing.
\end{proof}

\begin{proposition}\label{prop:basis_right}
   The bundles $\langle \CO,\, \CU^*,\, \Lambda^2 \CU^*,\, \ldots,\, \Lambda^{n-1} \CU^* \rangle$ 
   form a basis of a rectangular Lefschetz exceptional collection in $D^b(X)$. 
\end{proposition}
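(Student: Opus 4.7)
The variety $X = \IGr(n, 2n+1)$ is a smooth Fano variety with $K_X \sim -(n+2)H$, so its index is $r = n+2$. By Lemma~\ref{lm:basis}, the task reduces to showing that
\[
  \Ext^\bullet(\Lambda^j\CU^*(t),\, \Lambda^i\CU^*)
  \simeq H^\bullet\bigl(X,\, \Lambda^j\CU \otimes \Lambda^i\CU^* \otimes \CO(-t)\bigr)
\]
vanishes for every $0 \leq i \leq j \leq n-1$ and $0 \leq t \leq n+1$, except in the case $i = j$, $t = 0$, where it should be $\kk$ in degree $0$.

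The plan is to decompose this bundle into irreducible equivariant summands and apply Lemma~\ref{lm:main} to each of them. First I would write $\Lambda^j\CU \otimes \CO(-t) \simeq \Sigma^\mu\CU^*$ with $\mu = ((-t)^{n-j},\, (-t-1)^{j})$, and then use Pieri's formula (Lemma~\ref{lm:pieri}) to express $\Sigma^\mu\CU^* \otimes \Lambda^i\CU^*$ as the direct sum of the $\Sigma^\alpha\CU^*$ with $\alpha_l - \mu_l \in \{0,1\}$ and $\sum_l (\alpha_l - \mu_l) = i$. Dominance of $\alpha$ forces the added boxes to be front-loaded within each of the two blocks of $\mu$, so the summands are parametrized by integers $a, b \geq 0$ with $a + b = i$, $a \leq n-j$, $b \leq j$, and take the form
\[
  \alpha = \bigl((-t+1)^a,\, (-t)^{(n-j-a)+b},\, (-t-1)^{j-b}\bigr).
\]

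Next I would check the three hypotheses of Lemma~\ref{lm:main} for every such $\alpha$. All entries of $\alpha$ lie in $\{-t+1,-t,-t-1\}$, so consecutive differences are at most $1$ and condition (iii) is automatic; condition (ii) will follow from $\alpha_1 \geq -t \geq -(n+1)$ in the allowed range of $t$; and condition (i), $\alpha_n < 0$, should fail only when $\alpha_n = 0$, which forces $t = 0$ together with $b = j$, hence $a = i - j \geq 0$. Combined with the hypothesis $i \leq j$ this pins down $i = j$, $a = 0$, $t = 0$. Thus for all pairs $(i,t) \neq (j, 0)$ Lemma~\ref{lm:main} gives the required vanishing on each Pieri summand.

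The diagonal case $i = j$, $t = 0$ is then handled separately: the summands are indexed by $0 \leq a \leq \min(j, n-j)$ with $\alpha = (1^a,\, 0^{n-2a},\, (-1)^a)$, and for $a \geq 1$ Lemma~\ref{lm:main} still applies because $\alpha_n = -1 < 0$. Only the summand $a = 0$ survives, where $\Sigma^\alpha\CU^* \simeq \CO_X$ contributes exactly $\kk$ in degree $0$ (for instance by Kodaira vanishing on the smooth Fano variety $X$), producing the expected one-dimensional endomorphism space. The only delicate point is the combinatorial check that failure of hypothesis (i) of Lemma~\ref{lm:main} isolates precisely the diagonal exceptions $i = j$, $t = 0$; once this is in place, the rest of the argument is mechanical.
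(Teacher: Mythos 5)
Your proposal is correct and follows the paper's proof essentially verbatim: reduce to the $\Ext$-vanishing of Lemma~\ref{lm:basis}, decompose $\Lambda^p\CU^*\otimes\Lambda^q\CU(-t)$ into summands $\Sigma^\alpha\CU^*$ via Pieri's formulas, and check that every $\alpha$ except the trivial one in the case $p=q$, $t=0$ satisfies the hypotheses of Lemma~\ref{lm:main}; you merely spell out the combinatorics that the paper leaves implicit. One tiny slip: when the middle block of $\alpha$ is empty the consecutive difference can be $2$ rather than $1$, but since condition (iii) of Lemma~\ref{lm:main} only requires the difference to be at most $2$, your conclusion stands.
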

\begin{proof}
  Recall that $\omega_X\simeq \CO(-n-2)$. Following Lemma~\ref{lm:basis},
  we need to show that for $0\leq p\leq q \leq n-1$ and $0\leq t < n+2$ one has
  \begin{equation}\label{eq:basis_right}
    \Ext^\bullet(\Lambda^q\CU^*(t), \Lambda^p\CU^*) =
    \begin{cases}
      \kk, & \text{when } p=q \text{ and } t = 0, \\
      0, & \text{otherwise}.
    \end{cases}
  \end{equation}
  As $\Ext^\bullet(\Lambda^q\CU^*(t), \Lambda^p\CU^*)\simeq H^\bullet(X, \Lambda^p\CU^*\otimes\Lambda^q\CU(-t))$,
  it is sufficient to compute cohomology of every irreducible summand
  $\Sigma^\alpha\CU^*\subseteq \Lambda^p\CU^*\otimes\Lambda^q\CU(-t)$.
  It follows from Pieri's formulas that, unless $t=0$, $p=q$, and $\alpha=(0,\ldots,0)$,
  the weight $\alpha$ satisfies the conditions of Lemma~\ref{lm:main}.
  Thus, \eqref{eq:basis_right} holds.
\end{proof}

From now on we assume additionally that $n\geq 3$.
We omit the proofs of the following two propositions, as they are very similar to the proof
of the previous one.

\begin{proposition}\label{prop:basis}
  Let $p$ and $q$ be nonnegative integers such that $p+q=n-1$. Then the bundles
  \begin{equation*}
     \langle \Lambda^p \CU,\, \Lambda^{p-1} \CU,\, \ldots,\, \CU,\, \CO,\,
        \CU^*,\, \ldots,\, \Lambda^{q-1} \CU^*,\, \Lambda^{q} \CU^* \rangle
  \end{equation*}
  form a basis of a rectangular Lefschetz exceptional collection in $D^b(X)$.
\end{proposition}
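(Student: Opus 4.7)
By Lemma~\ref{lm:basis}, the task reduces to verifying that for the indexing $E_0, \ldots, E_{n-1}$ of the proposed collection---with $E_i = \Lambda^{p-i}\CU$ for $i \leq p$ and $E_i = \Lambda^{i-p}\CU^*$ for $i \geq p$---the group $\Ext^\bullet(E_j(t), E_i)$ vanishes for all $0 \leq i \leq j \leq n-1$ and $0 \leq t \leq n+1$, with the sole exception $i = j$, $t = 0$ where it equals $\kk$. Following the same strategy as in the proof of Proposition~\ref{prop:basis_right}, the plan is to decompose $E_i \otimes E_j^*(-t)$ into irreducible summands $\Sigma^\nu\CU^*$ and apply Lemma~\ref{lm:main} to each one.

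The key input is the standard identity $\Lambda^k\CU \simeq \Lambda^{n-k}\CU^* \otimes \CO(-1)$ (a consequence of $\det\CU^* \simeq \CO(1)$), which allows me to rewrite $E_i \otimes E_j^*(-t)$ uniformly as $\Lambda^A\CU^* \otimes \Lambda^B\CU^* \otimes \CO(-T)$ for suitable nonnegative integers $A, B, T$. The analysis splits into three cases: (1) both $E_i$ and $E_j$ lie in the $\CU$-part of the collection, (2) both lie in the $\CU^*$-part (this is essentially the setting of Proposition~\ref{prop:basis_right}), or (3) the mixed case. In cases (1) and (2) one has $T = t+1$ and $A + B \leq n$, whereas in case (3) one has $T = t + 2$ and $A + B = 2n - a_i - a_j \geq n + 1$, the last inequality being precisely where the hypothesis $p + q = n - 1$ enters. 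Pieri's formula (Lemma~\ref{lm:pieri}) then yields the decomposition
\begin{equation*}
  \Lambda^A\CU^* \otimes \Lambda^B\CU^* \otimes \CO(-T) \simeq \bigoplus_{r} \Sigma^{((2-T)^r,\,(1-T)^{A+B-2r},\,(-T)^{n-A-B+r})}\CU^*,
\end{equation*}
with $r$ ranging over $\max(0, A+B-n) \leq r \leq \min(A, B)$.

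It then remains to verify the hypotheses of Lemma~\ref{lm:main} on each summand $\Sigma^\nu \CU^*$. Condition~(iii) is automatic, since consecutive entries of $\nu$ differ by at most $1$. For condition~(ii), $\nu_1$ equals $2 - T$ whenever $r \geq 1$ and $1 - T$ otherwise; going through the three cases, and noting that in case~(3) the bound $A + B \geq n + 1$ forces $r \geq 1$, one checks that $\nu_1 \geq -(n+1)$ throughout the range $0 \leq t \leq n+1$. Condition~(i) can fail only when $\nu_n = 0$, which requires $T = 1$ together with $r$ at its minimum value and $A + B = n$; tracing this through, such a situation occurs only in cases (1) and (2) at $t = 0$ and forces $i = j$. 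In that diagonal situation the offending summand is $\Sigma^0\CU^* = \CO$ and yields the expected copy of $\kk$, while every other summand satisfies Lemma~\ref{lm:main} and is therefore acyclic. The most delicate point will be checking condition~(ii) in the mixed case~(3): it is exactly the hypothesis $p + q = n - 1$ that forces $r \geq 1$ there and thereby allows the bound $\nu_1 \geq -(n+1)$ to persist up to $t = n+1$.
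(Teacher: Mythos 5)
Your proposal is correct and follows exactly the route the paper intends (the paper omits this proof, noting it is ``very similar'' to that of Proposition~\ref{prop:basis_right}): reduce to the Ext-vanishing criterion of Lemma~\ref{lm:basis}, rewrite everything in terms of $\Lambda^\bullet\CU^*$, decompose via Pieri, and feed each summand to Lemma~\ref{lm:main}, with the hypothesis $p+q=n-1$ entering precisely where you say, namely to force $r\geq 1$ in the mixed case. One small imprecision: consecutive entries of $\nu$ can differ by $2$ rather than $1$ when the middle block $(1-T)^{A+B-2r}$ is empty, but condition~(iii) of Lemma~\ref{lm:main} permits gaps up to $2$, so your verification of that condition still goes through.
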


\begin{proposition}\label{prop:s2}
  The bundles $\langle S^2\CU,\, \CU,\, \CO \rangle$
  form a basis of a rectangular Lefschetz exceptional collection in $D^b(X)$.
\end{proposition}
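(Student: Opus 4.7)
The plan is to verify the criterion of Lemma~\ref{lm:basis} for the ordered collection $E_0 = S^2\CU$, $E_1 = \CU$, $E_2 = \CO$. Since the Fano index of $X = \IGr(n, 2n+1)$ equals $n+2$, I need to show that for every $0 \leq i \leq j \leq 2$ and every $0 \leq t \leq n+1$,
\[
  \Ext^\bullet(E_j(t), E_i) \;\simeq\; H^\bullet\!\left(X,\, E_i \otimes E_j^* \otimes \CO(-t)\right)
\]
is $\kk$ concentrated in degree zero when $i = j$ and $t = 0$, and vanishes otherwise. Following the template of Proposition~\ref{prop:basis_right}, I would decompose each of the six tensor products $E_i \otimes E_j^*$ into irreducible equivariant summands $\Sigma^\alpha\CU^*$ via Pieri's rule and apply the vanishing criterion of Lemma~\ref{lm:main} to each twisted summand.

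Writing $S^2\CU = \Sigma^{(0,\ldots,0,-2)}\CU^*$ and $\CU = \Sigma^{(0,\ldots,0,-1)}\CU^*$, a direct application of Lemma~\ref{lm:pieri} shows that every irreducible summand of any $E_i \otimes E_j^*$ has highest weight $\alpha$ in the finite list
\[
  \left\{(0,\ldots,0),\ (0,\ldots,0,-1),\ (0,\ldots,0,-2),\ (1,0,\ldots,0,-1),\ (1,0,\ldots,0,-2),\ (2,0,\ldots,0,-2)\right\}.
\]
The horizontal-strip constraint in Pieri's rule forces all interior coordinates to vanish, which is why this list is so short. Moreover, the trivial weight $(0,\ldots,0)$ appears only inside the diagonal products $E_i \otimes E_i^*$, where it contributes precisely the identity morphism.

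For each such $\alpha$ and each $1 \leq t \leq n+1$, the shifted weight $\alpha - (t, \ldots, t)$ of $\Sigma^\alpha\CU^*(-t)$ satisfies the three hypotheses of Lemma~\ref{lm:main}: condition~(i) holds because $\alpha_n \leq 0$ and $t \geq 1$; condition~(ii) reduces to $\alpha_1 - t \geq -t \geq -(n+1)$; and condition~(iii) is invariant under a constant shift, and each weight in the list above has consecutive differences lying in $\{0, 1, 2\}$. Lemma~\ref{lm:main} then yields the required acyclicity for every non-trivial summand. The remaining diagonal case $i = j$, $t = 0$ is handled by observing that $\CO$ contributes precisely $\kk$ in degree zero to $\Ext^\bullet(E_i, E_i)$, while the accompanying summands (at worst $\Sigma^{(1,0,\ldots,0,-1)}\CU^*$ and $\Sigma^{(2,0,\ldots,0,-2)}\CU^*$) are acyclic by the same lemma.

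The most delicate weight is $(2, 0, \ldots, 0, -2)$, arising inside $S^2\CU^* \otimes S^2\CU$; this is precisely where the standing assumption $n \geq 3$ intervenes, because for $n = 2$ the weight collapses to $(2, -2)$ with a single jump of $4$, which would violate condition~(iii) of Lemma~\ref{lm:main}. Beyond this observation, the only substantive work is the Pieri bookkeeping of the six products, and I do not anticipate any further obstacle — the argument is a direct parallel of the proof of Proposition~\ref{prop:basis_right}.
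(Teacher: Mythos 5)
Your proof is correct and follows exactly the route the paper intends: the paper omits the proof of this proposition, stating only that it is ``very similar to the proof of the previous one'' (Proposition~\ref{prop:basis_right}), and your argument is precisely that template — Pieri decomposition of the six products into the weights you list, followed by Lemma~\ref{lm:main}, with the trivial summand in the diagonal products supplying the identity. Your identification of the weight $(2,0,\ldots,0,-2)$ as the place where the standing assumption $n\geq 3$ enters is also accurate.
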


\begin{lemma}\label{lm:s2mut}
  Consider the subcategory $\CA = \langle \CU,\, \CO \rangle \subseteq D^b(X)$.
  Then $R_{\CA}(S^2\CU) \simeq \Lambda^2\CQ$.
\end{lemma}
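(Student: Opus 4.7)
The plan is to use the criterion characterising mutations of exceptional objects (recalled in the preliminaries): since $S^2\CU \in \CA^\perp$ by Proposition~\ref{prop:s2}, the object $R_\CA(S^2\CU)$ is pinned down up to shift by lying in $\langle S^2\CU, \CA\rangle \cap \lort{\CA}$ with one-dimensional $\Ext$ to $S^2\CU$. The goal is to verify these three properties for $\Lambda^2\CQ$.

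The key input is the four-term exact sequence of vector bundles on $X$
\begin{equation*}
0 \to S^2\CU \to V \otimes \CU \to \Lambda^2 V \otimes \CO \to \Lambda^2\CQ \to 0,
\end{equation*}
whose middle map is the antisymmetrisation $v \otimes u \mapsto v \wedge u$. Its kernel is $S^2\CU$---the symmetric part of $\CU \otimes \CU \subset V \otimes \CU$---and its image is the subbundle $\CU \wedge V \subset \Lambda^2 V \otimes \CO$, which coincides with the kernel of the natural surjection $\Lambda^2 V \otimes \CO \twoheadrightarrow \Lambda^2\CQ$ induced by the filtration of $\Lambda^2(V \otimes \CO)$ coming from $\CU \subset V$. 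In the derived category this yields an exact triangle relating $\Lambda^2\CQ$ and $S^2\CU$ modulo the two-term complex $[V \otimes \CU \to \Lambda^2 V \otimes \CO]$, whose entries visibly sit in $\CA = \langle \CU, \CO \rangle$; this places $\Lambda^2\CQ$ in $\langle S^2\CU, \CA\rangle$ and realises the required mutation triangle.

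The $\Ext$-dimension check is then immediate: applying $\RHom(-, S^2\CU)$ to the four-term sequence, Proposition~\ref{prop:s2} ensures that the contributions of both middle terms vanish, leaving only $\RHom(S^2\CU, S^2\CU) \simeq \kk$. The main obstacle is therefore the pair of orthogonality vanishings $\Ext^\bullet(\Lambda^2\CQ, \CU) = 0$ and $\Ext^\bullet(\Lambda^2\CQ, \CO) = 0$. I would deduce these using the geometric setup from the previous subsection: via the blow-up diagram~\eqref{eq:37diag} and the short exact sequences~\eqref{eq:q}, \eqref{eq:uub}, \eqref{eq:ubue} and \eqref{eq:l2uub}, one can transfer computations involving $\Lambda^2\CQ$ on $X$ into cohomology computations on the even isotropic Grassmannian $\Xb = \IGr(n, 2n)$ (where Proposition~\ref{thm:bbwc} applies directly) plus boundary terms supported on the closed orbit $Z$ and the exceptional divisor $E$, which are handled by Lemma~\ref{lm:main_even} and Lemma~\ref{lm:main}.
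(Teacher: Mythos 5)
Your first two steps coincide with the paper's: the four\nobreakdash-term sequence $0\to S^2\CU\to V\otimes\CU\to\Lambda^2V\otimes\CO\to\Lambda^2\CQ\to 0$ restricted from $\Gr(n,2n+1)$ realises the mutation triangle and reduces everything to showing $\Lambda^2\CQ\in\lort{\langle\CU,\CO\rangle}$, i.e. $\Ext^\bullet(\Lambda^2\CQ,\CO)=\Ext^\bullet(\Lambda^2\CQ,\CU)=0$. The problem is that this last step --- the actual content of the lemma --- is only gestured at, and the route you propose does not go through with the tools you cite. Lemmas~\ref{lm:main_even} and~\ref{lm:main} apply only to bundles of the form $\Sigma^\lambda\CU^*$ on $\IGr(n,2n+2)$ and $\IGr(n,2n+1)$ respectively; the groups you need involve $\Lambda^2\CQ^*$, and on the isotropic Grassmannian $\CQ$ has rank $n+1$ and is \emph{not} expressible through $\CU$ alone, so these lemmas simply do not see the bundles $\Lambda^2\CQ^*$ and $\Lambda^2\CQ^*\otimes\CU^*$. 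Likewise, pushing the computation up to $\Xt$ and over to $\Xb$ via the sequences~\eqref{eq:q}--\eqref{eq:l2uub} would leave you with cohomology on $E\simeq\IFl(n-1,n;\Vb)$ and on $Z\simeq\IGr(n-1,\Vb)$ of mixed bundles twisted by powers of $\CO(E)$, for which no vanishing statement in the paper is available at this point; this machinery is deployed only later, in Section~\ref{ssec:37full}, and for fullness rather than for orthogonality.

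The paper's actual mechanism is different and is the one that makes the computation tractable: it uses the \emph{other} embedding $j\colon X\hookrightarrow\Gr(n,2n+1)$ as the zero locus of the regular section $\omega$ of $\Lambda^2\CU^*$, writes $\Ext^\bullet(\Lambda^2\CQ,\CO)\simeq H^\bullet(\Gr(n,2n+1),\Lambda^2\CQ^*\otimes j_*\CO)$ (and similarly with an extra $\CU$), replaces $j_*\CO$ by the Koszul resolution $\Lambda^\bullet\Lambda^2\CU$, and decomposes $\Lambda^s\Lambda^2\CU$ via Lemma~\ref{lm:ll2}. This reduces everything to vanishing of $H^\bullet(\Gr(n,2n+1),\Sigma^\lambda\CU\otimes\Sigma^\mu\CQ^*)$ with $\mu=(1,1,0,\ldots,0)$ on the \emph{ordinary} Grassmannian, where Lemma~\ref{lm:kap-dual} applies; the only potential nonvanishing case $\lambda=\mu^T$ is excluded by the parity/shape count $|\lambda|\in\{2s,2s+1\}$ and $(1,1,0,\ldots,0)\neq(2,0,\ldots,0)$. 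To repair your argument you should replace your final paragraph by this reduction (or supply an independent proof of the two vanishings); as written, the key assertion is unsupported.
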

\begin{proof}
  Restricting the exact sequence 
  \begin{equation*}
    0\to S^2\CU\to V\otimes\CU\to \Lambda^2V\otimes\CO\to \Lambda^2\CQ\to 0
  \end{equation*}
  from $\Gr(n, 2n+1)$ to $X$, we see that it is enough to check that
  $\Lambda^2\CQ\in \lort{\langle\CU, \CO\rangle}$.
  Consider the embedding $j:X\to \Gr(n,2n+1)$. Then
  \begin{equation*}
    \Ext^\bullet(\Lambda^2\CQ, \CO)\simeq H^\bullet(X, \Lambda^2\CQ^*)\simeq
    H^\bullet(\Gr(n, 2n+1), \Lambda^2\CQ^*\otimes j_*\CO).
  \end{equation*}
  Similarly,
  \begin{equation*}
    \Ext^\bullet(\Lambda^2\CQ, \CU)\simeq H^\bullet(\Gr(n, 2n+1),\, \CU\otimes\Lambda^2\CQ^*\otimes j_*\CO).
  \end{equation*}
  Replacing $j_*\CO$ with its Koszul resolution, we reduce the problem to showing
  that for all irreducible summands $\Sigma^\alpha\CU\in \Lambda^p\Lambda^2\CU$,
  where $s=0,\ldots,\rk\Lambda^2\CU$,
  \begin{equation*}
    H^\bullet(\Gr(n, 2n+1), \Lambda^2\CQ^*\otimes\Sigma^\alpha\CU) = 
    H^\bullet(\Gr(n, 2n+1), \Lambda^2\CQ^*\otimes\Sigma^\alpha\CU\otimes\CU) = 0. 
  \end{equation*}
  Denote $\mu=(1,1,0,\ldots,0)$ and let $\lambda$ be either $\alpha$ or the weight
  of an irreducible summand of $\Sigma^\alpha\CU\otimes\CU$.
  Again, it is enough to show that
  \begin{equation}\label{eq:s2mut}
    H^\bullet(\Gr(n,2n+1), \Sigma^\lambda\CU\otimes\Sigma^\mu\CQ^*) = 0.
  \end{equation}

  It follows from Lemmas~\ref{lm:ll2} and~\ref{lm:pieri} that $\lambda$ and $\mu$
  satisfy the conditions of Lemma~\ref{lm:kap-dual}. Thus, \eqref{eq:s2mut}
  holds unless $\lambda=\mu^T$. The latter implies that
  \(
    2=|\mu^T|=|\lambda| \in \{2s,2s+1\}
  \). It remains consider the case $|\lambda|=2$, in which
  $\lambda=(1,1,0,\ldots,0)\neq (2, 0,\ldots,0)=\mu^T$.
\end{proof}

\begin{corollary}\label{cor:l2}
  The bundles $\langle \CU,\, \CO,\, \Lambda^2\CQ \rangle$ form a basis
  of a rectangular Lefschetz exceptional collection in $D^b(X)$.
\end{corollary}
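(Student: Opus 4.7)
The plan is to combine Proposition~\ref{prop:s2} with Lemma~\ref{lm:s2mut}, using the principle that mutating within a Lefschetz basis produces another Lefschetz basis for the same subcategory. Proposition~\ref{prop:s2} provides the Lefschetz basis $\langle S^2\CU, \CU, \CO\rangle$, and Lemma~\ref{lm:s2mut} identifies $\Lambda^2\CQ$ with $R_{\langle \CU, \CO\rangle}(S^2\CU)$. So the task reduces to observing that right mutation of the leftmost object of a Lefschetz basis past the admissible subcategory generated by the remaining ones gives a new ordered exceptional collection generating the same subcategory, which is therefore still a Lefschetz basis.

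First I would set $\CA = \langle S^2\CU, \CU, \CO\rangle$ and note that right mutation of $S^2\CU$ past $\langle \CU, \CO\rangle$ rearranges the ordered exceptional collection, so that as subcategories $\CA = \langle \CU, \CO, \Lambda^2\CQ\rangle$, with $\CU, \CO, \Lambda^2\CQ$ an exceptional collection in that order. The semiorthogonalities $\Ext^\bullet(\Lambda^2\CQ, \CU) = \Ext^\bullet(\Lambda^2\CQ, \CO) = 0$ are built into the defining property $R_{\langle \CU, \CO\rangle}(S^2\CU) \in \lort{\langle \CU, \CO\rangle}$, while exceptionality of $\Lambda^2\CQ$ follows from the fact that mutation gives an equivalence $\CA^\perp \xrightarrow{\sim} \lort{\CA}$ and hence sends the exceptional object $S^2\CU$ to an exceptional object.

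Next I would verify the conditions of Lemma~\ref{lm:basis} for the ordered collection $F_0 = \CU$, $F_1 = \CO$, $F_2 = \Lambda^2\CQ$, with index $r = n+2$. Since $\CA$ is unchanged as a subcategory of $D^b(X)$, the semiorthogonal decomposition $\langle \CA, \CA(1), \ldots, \CA(n+1)\rangle \subseteq D^b(X)$ inherited from Proposition~\ref{prop:s2} still applies, yielding $\Ext^\bullet(\CA(t), \CA) = 0$ for all $0 < t < n+2$. This gives the required vanishings $\Ext^\bullet(F_j(t), F_i) = 0$ for $0 \leq i \leq j \leq 2$ and $0 < t < n+2$; the $t = 0$ case was handled in the previous paragraph. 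There is no genuine obstacle; the real content lies in Proposition~\ref{prop:s2} and Lemma~\ref{lm:s2mut}, and the corollary is their formal consequence.
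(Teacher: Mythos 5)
Your argument is correct and is exactly the paper's (one-line) proof, merely spelled out: the collection $\langle \CU,\CO,\Lambda^2\CQ\rangle$ is obtained from the Lefschetz basis of Proposition~\ref{prop:s2} by the mutation computed in Lemma~\ref{lm:s2mut}, so it generates the same admissible subcategory and inherits the Lefschetz property. The details you supply (exceptionality and semiorthogonality being preserved under $R_{\langle\CU,\CO\rangle}$) are the standard facts the paper leaves implicit.
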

\begin{proof}
  Follows from Proposition~\ref{prop:s2}, as the two Lefschetz
  collections are related by blockwise mutations.
\end{proof}

Finally, we can prove the following.

\begin{proposition}\label{prop:37eb}
  The bundles $\langle \CU,\, \CO,\, \CU^*,\, \Lambda^2\CQ \rangle$ form
  a basis of a rectangular Lefschetz exceptional collection in $D^b(X)$.
\end{proposition}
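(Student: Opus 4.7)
My plan is to apply Lemma~\ref{lm:basis} to the ordered collection $(E_0, E_1, E_2, E_3) = (\CU, \CO, \CU^*, \Lambda^2\CQ)$ and reduce the Lefschetz basis property to a single outstanding Ext vanishing. Since the subcollection $(\CU, \CO, \CU^*)$ is a Lefschetz basis by Proposition~\ref{prop:basis} (applied with $p = q = 1$), and the subcollection $(\CU, \CO, \Lambda^2\CQ)$ is a Lefschetz basis by Corollary~\ref{cor:l2}, every condition of Lemma~\ref{lm:basis} that involves only the first triple or only the second is already known. The only remaining condition is
\begin{equation*}
  \Ext^\bullet(\Lambda^2\CQ(t), \CU^*) = 0 \quad \text{for } 0 \leq t \leq n+1. \quad (\star)
\end{equation*}

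To prove $(\star)$, I will follow the strategy of Lemmas~\ref{lm:main} and~\ref{lm:s2mut} and transfer the computation to the ambient Grassmannian $Y := \Gr(n, V)$ via the closed embedding $j\colon X \hookrightarrow Y$ cut out by the regular section of $\Lambda^2\CU^*$. By projection formula,
\begin{equation*}
  \Ext^\bullet(\Lambda^2\CQ(t), \CU^*) \;=\; H^\bullet\bigl(Y,\, \Lambda^2\CQ^* \otimes \CU^*(-t) \otimes j_*\CO_X\bigr),
\end{equation*}
and I resolve $j_*\CO_X$ by its Koszul complex, whose $p$-th term is $\Lambda^p\Lambda^2\CU$; Lemma~\ref{lm:ll2} decomposes each such term into irreducibles $\Sigma^\lambda\CU$ for $\lambda$ almost symmetric with $2p$ boxes. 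For $n = 3$ the resolution takes the very concrete form $0 \to \CO(-2) \to \CU(-1) \to \Lambda^2\CU \to \CO \to j_*\CO_X \to 0$, using $\Lambda^2\Lambda^2\CU \cong \Sigma^{(2,1,1)}\CU \cong \CU(-1)$ and $\Lambda^3\Lambda^2\CU \cong \det\Lambda^2\CU \cong \CO(-2)$. The hypercohomology spectral sequence then reduces $(\star)$ to the termwise vanishing
\begin{equation*}
  H^\bullet\bigl(Y,\, \Lambda^2\CQ^* \otimes \CU^*(-t) \otimes \Sigma^\lambda\CU\bigr) = 0
\end{equation*}
for every $0 \leq t \leq n+1$ and every almost symmetric~$\lambda$.

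The final step is a Borel--Bott--Weil check. Pieri's formula (Lemma~\ref{lm:pieri}) expands $\CU^*(-t) \otimes \Sigma^\lambda\CU$ into irreducible summands $\Sigma^\nu\CU^*$, and Theorem~\ref{thm:bbw} applied to $\Sigma^\nu\CU^* \otimes \Lambda^2\CQ^* = \Sigma^\nu\CU^* \otimes \Sigma^{(1,1,0,\ldots,0)}\CQ^*$ on $Y$ yields the sequence
\begin{equation*}
  \alpha = \bigl((2n+1)+\nu_1,\; \ldots,\; (n+2)+\nu_n,\; n+2,\; n+1,\; n-1,\; n-2,\; \ldots,\; 1\bigr),
\end{equation*}
with cohomology vanishing as soon as two entries of $\alpha$ coincide. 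The hard part will be the combinatorial check that the rigid tail $(n+2, n+1, n-1, \ldots, 1)$ absorbs at least one head entry for every admissible $\nu$ and every $0 \leq t \leq n+1$. For the case $n = 3$ underlying the main theorem, the tail is $(5, 4, 2, 1)$ and a direct enumeration over the four Koszul terms, the five twists $t \in \{0, 1, 2, 3, 4\}$, and the few Littlewood--Richardson summands of each $\CU^*(-t) \otimes \Lambda^p\Lambda^2\CU$ closes the argument.
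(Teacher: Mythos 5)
Your reduction is exactly the right one, and it matches the paper's: by Lemma~\ref{lm:basis}, every pairwise condition except the one between $\CU^*$ and $\Lambda^2\CQ$ is already contained in Proposition~\ref{prop:basis} (for the subcollection $\langle\CU,\CO,\CU^*\rangle$) or Corollary~\ref{cor:l2} (for $\langle\CU,\CO,\Lambda^2\CQ\rangle$), so everything hinges on $\Ext^\bullet(\Lambda^2\CQ(t),\CU^*)=0$ for $0\leq t\leq n+1$. Your transfer of this computation to $\Gr(n,2n+1)$ via the Koszul resolution of $j_*\CO_X$ coming from the section of $\Lambda^2\CU^*$, together with Lemma~\ref{lm:ll2}, is also the route the paper takes, and your explicit form of the resolution for $n=3$ (with $\Lambda^2\Lambda^2\CU\simeq\CU(-1)$ and $\Lambda^3\Lambda^2\CU\simeq\CO(-2)$) is correct.

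The problem is that the proof stops exactly where the mathematical content begins. The phrases ``the hard part will be the combinatorial check\dots'' and ``a direct enumeration \dots closes the argument'' are promissory notes: the required vanishing of $H^\bullet\bigl(\Gr(n,2n+1),\,\Lambda^2\CQ^*(-t)\otimes\CU^*\otimes\Sigma^\alpha\CU\bigr)$ for every Koszul summand and every $t$ is never actually verified, and this verification is the entire substance of the proposition. Note also that the proposition is stated (and used) for general $n\geq 3$, whereas your enumeration is only envisaged for $n=3$. The paper closes this gap without brute force and uniformly in $n$: for $t>0$ it rewrites $\CU^*(-t)\simeq\Lambda^{n-1}\CU(1-t)$, absorbs the remaining twist into $\Sigma^{(t,t,t-1,\ldots,t-1)}\CQ^*\simeq\Lambda^2\CQ^*(-t+1)$, and invokes Kapranov's orthogonality (Lemma~\ref{lm:kap-dual}) together with the bound $\lambda_1\leq n$ supplied by Pieri and Lemma~\ref{lm:ll2} (with a separate one-line count of boxes for $t=1$); for $t=0$ it applies the vanishing criterion of Lemma~\ref{lm:kap-gen} with $p=q=n-1$. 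You should either carry out your enumeration in full or, better, reorganize the case analysis along these lines so that the argument covers all $n\geq 3$.
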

\begin{proof}
  It follows from a combination of Lemmas~\ref{lm:basis}, \ref{prop:basis}, and Corollary~\ref{cor:l2}
  that it suffices to check the vanishing
  \begin{equation*}
    \Ext^\bullet(\Lambda^2\CQ(t), \CU^*)\simeq H^\bullet(X, \Lambda^2\CQ^*(-t)\otimes\CU^*)\simeq
    H^\bullet(\Gr(n, 2n+1),\, \Lambda^2\CQ^*(-t)\otimes\CU^*\otimes j_*\CO)=0
  \end{equation*}
  for $t=0,\ldots, n+1$, where $j:X\to\Gr(n, 2n+1)$ is the usual embedding.
  As in the proof of Lemma~\ref{lm:s2mut}, we replace $j_*\CO$ with its
  Koszul resolution and desire to show that 
  \begin{equation*}
    H^\bullet(\Gr(n, 2n+1),\, \Lambda^2\CQ^*(-t)\otimes\CU^*\otimes\Sigma^\alpha\CU)=0
  \end{equation*}
  for any irreducible summand $\Sigma^\alpha\CU\subseteq \Lambda^s\Lambda^2\CU$,
  $s=0,\ldots,\rk \Lambda^2\CU$.

  First assume that $t > 0$. Then
  $\Lambda^2\CQ^*(-t)\otimes\CU^*\otimes\Sigma^\alpha\CU\simeq
  \Lambda^2\CQ^*(-t+1)\otimes\Lambda^{n-1}\CU\otimes\Sigma^\alpha\CU$. Let
  $\Sigma^\lambda\CU$ be an irreducible summand in $\Lambda^{n-1}\CU\otimes\Sigma^\alpha\CU$
  and let $\mu=(t,t,t-1,\ldots,t-1)$. Remark that $\Sigma^\mu\CQ^*\simeq \Lambda^2\CQ^*(-t+1)$.
  It follows from Pieri's rule and Lemma~\ref{lm:ll2} that $\lambda_1\leq n$.
  If $t>1$, then $(\mu^T)_1=n+1$, which implies $\lambda\neq\mu^T$. Thus,
  it follows from Lemma~\ref{lm:kap-dual} that the bundle
  $\Sigma^\mu\CQ^*\otimes\Sigma^\lambda\CU$ is acyclic.
  If $t=1$, then $|\lambda|=2s+n-1\geq 2$, while $|\mu^T|=2$. Thus,
  if $\lambda=\mu^T$, then $s=0$ and $n=3$, which implies that
  $\lambda=(1,1,0,\ldots,0)\neq (2,0,\ldots,0)=\mu^T$.

  It remains to deal with the case $t=0$. Let $\mu=(1,1,0,\ldots,0)$, and let
  $\Sigma^\lambda\CU$ be an irreducible summand in $\CU^*\otimes\Sigma^\alpha\CU$.
  It follows from Pieri's formulas and Lemma~\ref{lm:ll2} that
  $n-1\geq \lambda_1$ and $\lambda_{n-1}\geq 0$, while
  $(\lambda_1,\ldots,\lambda_{n-1})\neq (\mu_1,\ldots,\mu_{n-1})^T$.
  To get the desired vanishing apply Lemma~\ref{lm:kap-gen} for $p=q=n-1$. 
\end{proof}

\subsection{Fullness}\label{ssec:37full}
The main result of this section is the following statement.  

\begin{proposition}\label{prop:37full}
  The rectangular Lefschetz exceptional collection with the basis
  $\langle \CU,\, \CO,\, \CU^*,\, \Lambda^2\CQ \rangle$ is full in $D^b(\IGr(3, 7))$.
  Namely,
  \begin{equation}\label{eq:ec}
    D^b(\IGr(3, 7)) =
      \begin{pmatrix*}[r]
        \Lambda^2\CQ & \Lambda^2\CQ(1) & \Lambda^2\CQ(2) & \Lambda^2\CQ(3) & \Lambda^2\CQ(4) \\
        \CU^* & \CU^*(1) & \CU^*(2) & \CU^*(3) & \CU^*(4) \\
        \CO & \CO(1) & \CO(2) & \CO(3) & \CO(4) \\
        \CU & \CU(1) & \CU(2) & \CU(3) & \CU(4) \\
      \end{pmatrix*},
  \end{equation}
  where the exceptional objects are ordered bottom to top, left to right.
\end{proposition}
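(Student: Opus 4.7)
The plan is to compare two semiorthogonal decompositions of $D^b(\Xt)$ afforded by diagram~\eqref{eq:37diag}: the Orlov blow-up decomposition for $p\colon \Xt\to X$, and the Orlov projective bundle decomposition for $q\colon \Xt\to \Xb = \IGr(3, 8)$. Let $\CT \subseteq D^b(X)$ denote the admissible subcategory generated by the full rectangular Lefschetz collection~\eqref{eq:ec}, which is exceptional by Proposition~\ref{prop:37eb}. Since $Z$ has codimension $c = 2$ in $X$, the blow-up decomposition reads
\begin{equation*}
   D^b(\Xt) = \langle \iot_*\pt^*D^b(Z)(E),\, p^*D^b(X)\rangle,
\end{equation*}
and because $p^*$ is fully faithful, the sought-for equality $\CT = D^b(X)$ is equivalent to
\begin{equation*}
   \CT_\Xt := \langle \iot_*\pt^*D^b(Z)(E),\, p^*\CT\rangle = D^b(\Xt).
\end{equation*}

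Working on the other side, the map $q$ realizes $\Xt\simeq\PP_\Xb(\CUb\oplus\CO)$ with relative Grothendieck line bundle $\CO(E)$ (identifying $\CUt/\CU\simeq \CO(E)$ from~\eqref{eq:uut} together with the splitting~\eqref{eq:ut}). Since $\Xb = \IGr(3,8)$ is rational homogeneous under $\SP_8$, the equivariant bundles $\Sigma^\lambda\CUb^*$ for $\lambda\in P^+_3$ generate $D^b(\Xb)$, so Orlov's projective bundle formula shows that $D^b(\Xt)$ is generated by the objects $q^*\Sigma^\lambda\CUb^*\otimes \CO(tE)$ for $t\in\{-3,-2,-1,0\}$. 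It therefore suffices to place each such generator inside $\CT_\Xt$.

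For this I would use the short exact sequences from Section~3 --- \eqref{eq:q}, \eqref{eq:uub}, \eqref{eq:ubue}, \eqref{eq:l2uub}, \eqref{eq:ut}, and \eqref{eq:uut} --- together with Pieri's rule (Lemma~\ref{lm:pieri}) and the rank-$3$ identities $\Lambda^2\CU\simeq \CU^*\otimes\det\CU$ and $\Lambda^2\CUb\simeq \CUb^*\otimes\det\CUb$. These sequences rewrite $\CUb$, $\CUb^*$, $\Lambda^2\CUb$, and $\det\CUb$ as extensions involving $\CU$, $\CQ$, $\Lambda^2\CQ$, and the line bundle $\CO(E)$, modulo sheaves of the form $\iot_*\pt^*\CF$ with $\CF$ an equivariant bundle on $Z=\IGr(2,6)$. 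Iterating, and combining with the short exact sequence $0\to \CO(-E)\to \CO\to \iot_*\CO_E\to 0$ to absorb $\CO(sE)$-twists on $p^*$-pieces, produces for each generator $q^*\Sigma^\lambda\CUb^*\otimes \CO(tE)$ a finite filtration whose successive quotients lie either in $p^*\CT$ or are of the form $\iot_*\pt^*\CF(sE)$; Lemma~\ref{lm:blmut} then folds each of the latter back into the single block $\iot_*\pt^*D^b(Z)(E)\subseteq \CT_\Xt$ at the cost of a $\det\CN^*$-twist, and the proof is concluded.

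The main obstacle is the combinatorial verification: one must check that every $\Sigma^\lambda\CUb^*$ arising in the iteration reduces to an object of $\CT$ after the $\iot_*\pt^*$-terms are stripped away. The delicate step is the appearance of $\Lambda^2\CQ$ (rather than the more natural $\Lambda^2\CU^*$) in the basis, which forces the reduction to pass through the mutation of Lemma~\ref{lm:s2mut}; the various $\CO(sE)$- and $\det\CN^*$-twists introduced by Lemma~\ref{lm:blmut} must also be reconciled with the five columns of the rectangular collection~\eqref{eq:ec}. The $\Ext$-vanishings needed to control these mutations and filtrations are supplied by the vanishing criteria of Lemmas~\ref{lm:main_even}, \ref{lm:main}, and~\ref{lm:kap-gen}, which are precisely the technical reason for establishing those lemmas at the start of Section~\ref{sec:37}.
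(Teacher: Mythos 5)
Your high-level skeleton matches the paper's: compare the Orlov blow-up decomposition of $D^b(\Xt)$ along $p$ with the Orlov projective-bundle decomposition along $q$, and then push generators of the latter into $\CAt=\langle \iot_*\pt^*D^b(Z)(E),\,p^*\CA\rangle$ using the exact sequences of Section~3 and Lemma~\ref{lm:blmut}. However, there are two genuine problems. First, you misidentify the base of $q$: the variety $\Xb=\IGr(3,\bar V)$ with $\bar V=V/K$ of dimension $6$ is the Lagrangian Grassmannian $\LGr(3,6)$, not $\IGr(3,8)$ (you are conflating $q\colon\Xt\to\Xb$ with the unrelated embedding $X\hookrightarrow\IGr(3,V')$ used for the Koszul resolution). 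This is not cosmetic: the reduction to a \emph{finite} list of generators rests on Samokhin's full exceptional collection $\langle\CO,\CUb\rangle\otimes\{\CO,\CO(\Hb),\CO(2\Hb),\CO(3\Hb)\}$ on $\LGr(3,6)$, which is the one external input the argument cannot do without. For $\IGr(3,8)$ no full exceptional collection is known, so your version of the statement would rest on an open problem; and falling back on ``all $\Sigma^\lambda\CUb^*$ generate'' (true, but an infinite family) leaves you with infinitely many objects $q^*\Sigma^\lambda\CUb^*(tE)$ to handle and no inductive scheme for doing so.

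Second, even granting the correct finite list (forty twists of $\CO$ and $\CUb$ by combinations of $H$ and $E$, after mutating two of the four blocks via Lemma~\ref{lm:mutproj}), the statement that each of them ``reduces to an object of $\CT$ after the $\iot_*\pt^*$-terms are stripped away'' is precisely the content of the proof, not a verification to be deferred: it occupies ten nontrivial steps in the paper, including a Koszul-complex argument for $\CUb(3H-2E)$ and a chain of dualizations of \eqref{eq:uub}, \eqref{eq:ubue}, \eqref{eq:uut} that is not a routine consequence of the sequences you list. A minor further point: the vanishing lemmas \ref{lm:main_even}, \ref{lm:main}, \ref{lm:kap-gen} play no role in the fullness argument; they are used only to establish exceptionality of the basis (Proposition~\ref{prop:37eb}), which is quoted as a prior result. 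Fullness here is a pure generation statement and needs no $\Ext$-vanishing.
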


The proof of Proposition~\ref{prop:37full} that we present here is rather straightforward.
It can be shortened a bit using so called \emph{staircase complexes} and their relative
versions (see~\cite{fonarevKP}). Unfortunately, we do not have a general enough construction yet,
so we choose to take the simpler path and avoid extra definitions.

\begin{proof}
   Let $\CA\subseteq D^b(X)$ denote the full triangulated subcategory generated by
   the Lefschetz exceptional collection~\eqref{eq:ec}. On the one hand,
   from Orlov's blow-up formula we know that there is a semiorthogonal decomposition
   \begin{equation*}
      D^b(\Xt) = \langle \iot_*\pt^* D^b(Z)(E),\, p^*D^b(X) \rangle.
   \end{equation*}
   As the functor $p^*:D^b(X)\to D^b(\Xt)$ is fully faithful, in order to prove
   that $\CA=D^b(X)$ it suffices to show that the category
   \begin{equation*}
      \CAt = \langle \iot_*\pt^* D^b(Z)(E),\, p^*\CA \rangle.
   \end{equation*}
   coincides with $D^b(\Xt)$.
   
   On the other hand, $\Xt = \PP_\Xb(\CUt^*)$, and the corresponding Grothendieck
   line bundle is isomorphic to $\CO(E)$. Orlov's projective bundle formula provides
   the semiorthogonal decomposition
   \begin{equation}\label{eq:37p}
      D^b(\Xt) = \langle D^b(\Xb),\, D^b(\Xb)(E),\, D^b(\Xb)(2E),\, D^b(\Xb)(3E) \rangle.
   \end{equation}
   Recall that $\Xb$ is nothing but the even Lagrangian Grassmannian $\LGr(3, 6)$.
   It was shown in~\cite{samokhin2001derived} that its derived category admits
   a full Lefschetz rectangular exceptional collection
   \begin{equation}\label{eq:36}
      D^b(\Xb) = 
      \begin{pmatrix*}[r]
         \CO & \CO(\Hb) & \CO(2\Hb) & \CO(3\Hb) \\
         \CUb & \CUb(\Hb) & \CUb(2\Hb) & \CUb(3\Hb) \\
      \end{pmatrix*}.
    \end{equation}

   Let $\CB_i$ denote the $i$-th component $D^b(\Xb)(iE)$ of the decomposition~\eqref{eq:37p}.
   Here is our choice of full exceptional collections for $\CB_i$: for the first 3 blocks
   we take the collection~\eqref{eq:36}, while for $\CB_3$ we take the collection~\eqref{eq:36}
   further twisted by $\CO(\Hb)$. Finally, we mutate the objects $\CO(2\Hb)$, $\CUb(2\Hb)$,
   $\CO(3\Hb)$, and $\CUb(3\Hb)$ through $\langle\CB_1, \CB_2, \CB_3\rangle$ using
   Lemma~\ref{lm:mutproj} to obtain the full exceptional collection
   \begin{equation}
     \label{eq:xtec}
     D^b(\Xt) =
     \begin{pmatrix*}[l]
      & & & \CO(3\Hb+4E) & \CO(4\Hb+4E) \\
      & & & \CUb(3\Hb+4E) & \CUb(4\Hb+4E) \\[4pt]
      & \CO(\Hb+3E) & \CO(2\Hb+3E) & \CO(3\Hb+3E) & \CO(4\Hb+3E) \\
      & \CUb(\Hb+3E) & \CUb(2\Hb+3E) & \CUb(3\Hb+3E) & \CUb(4\Hb+3E) \\[4pt]
      \CO(2E) & \CO(\Hb+2E) & \CO(2\Hb+2E) & \CO(3\Hb+2E) \\
      \CUb(2E) & \CUb(\Hb+2E) & \CUb(2\Hb+2E) & \CUb(3\Hb+2E) \\[4pt]
      \CO(E) & \CO(\Hb+E) & \CO(2\Hb+E) & \CO(3\Hb+E) \\
      \CUb(E) & \CUb(\Hb+E) & \CUb(2\Hb+E) & \CUb(3\Hb+E) \\[4pt]
      \CO & \CO(\Hb) & & \\
      \CUb & \CUb(\Hb) & & \\
     \end{pmatrix*}
   \end{equation}
   
   We are going to show that $\CAt=D^b(\Xt)$ by showing that every object in
   the collection~\eqref{eq:xtec} belongs to $\CAt$. Recall that there is a short exact
   sequence of vector bundles
   \begin{equation}
     \label{eq:ubub*}
     0 \to \CUb \to \Vb\otimes\CO \to \CUb^* \to 0
   \end{equation}
   on $\Xt$, which is just the pullback of the corresponding sequence from $\Xb$.
   In the following we repeatedly use the following trivial fact, which immediately follows
   from the existence of the short exact sequence~\eqref{eq:ubub*}: for any line bundle
   $\CL \in \Pic \Xt$ if two of the bundles $\CUb\otimes \CL$, $\CL$, $\CUb^*\otimes \CL$
   belong to $\CAt$, then the third one belongs to $\CAt$ as well. Finally, it will be convenient
   to us to rewrite the collection~\eqref{eq:xtec} using the rational equivalence
   $\Hb\sim H - E$.

   \begin{equation}
     \label{eq:xtech}
     D^b(\Xt) =
     \begin{pmatrix*}[l]
      & & & \CO(3H+E) & \CO(4H) \\
      & & & \CUb(3H+E) & \CUb(4H) \\[4pt]
      & \CO(H+2E) & \CO(2H+E) & \CO(3H) & \CO(4H-E) \\
      & \CUb(H+2E) & \CUb(2H+E) & \CUb(3H) & \CUb(4H-E) \\[4pt]
      \CO(2E) & \CO(H+E) & \CO(2H) & \CO(3H-E) \\
      \CUb(2E) & \CUb(H+E) & \CUb(2H) & \CUb(3H-E) \\[4pt]
      \CO(E) & \CO(H) & \CO(2H-E) & \CO(3H-2E) \\
      \CUb(E) & \CUb(H) & \CUb(2H-E) & \CUb(3H-2E) \\[4pt]
      \CO & \CO(H-E) & & \\
      \CUb & \CUb(H-E) & & \\
     \end{pmatrix*}
   \end{equation}

   The rest of the proof consists of showing that the bundles from the collection~\eqref{eq:xtech} belong to $\CAt$.
   In each case we produce a short exact sequece (or a complex) with all terms but
   one known to be in $\CAt$ and conclude that the remaining term in $\CAt$ as well.
   To simplify notation, we denote the subcategory $\iot_*\pt^*D(Z)(E)$ by $\CB$.

   \emph{Step 1.} The bundles $\boxed{\CO$, $\CO(H)$, $\CO(2H)$, $\CO(3H)$, $\CO(4H)}$ belong to $\CAt$,
   as they belong to $p^*\CA$.

   \emph{Step 2.} The bundles $\boxed{\CUb$, $\CUb(H)$, $\CUb(2H)$, $\CUb(3H)$, $\CUb(4H)}$
   belong to $\CAt$.
   
   Twisting the short exact sequence~\eqref{eq:uub} by $\CO(tH)$ for $t=0,\ldots,4$,
   we get the sequence
   \begin{equation}
     \label{eq:st2}
     0\to \CU(tH) \to \CUb(tH) \to \iot_*\CO(tH + E) \to 0.      
   \end{equation}
   The first sheaf in the sequence~\eqref{eq:st2} belongs to $p^*\CA$, while the third sheaf
   is in $\CB$.
   
   \emph{Step 3.} The bundles $\boxed{\CO(E)$, $\CO(H+E)$, $\CO(2H+E)$, $\CO(3H+E)$, $\CO(4H+E)}$
   belong to $\CAt$.\nopagebreak
   
   It suffices to twist the short exact sequence
   \begin{equation*}
      0\to \CO\to \CO(E)\to \iot_*\CO(E)\to 0
   \end{equation*}
   by $\CO(tH)$ for $t=0,\ldots,4$, and use Step~1.

   \emph{Step 4.} The bundles
   $\boxed{\CUb(E)$, $\CUb(H+E)$, $\CUb(2H+E)$, $\CUb(3H+E)$, $\CUb(4H+E)}$ belong to $\CAt$.
   
   Dualizing the short exact sequence~\eqref{eq:ubue} and using the Grothendieck
   duality, we obtain a short exact sequence of the form
   \begin{equation}\label{eq:ueub}
      0\to \CU^*(-E)\to \CUb^*\to \iot_*\CW^*\to 0.
   \end{equation}
   Twisting the sequence~\eqref{eq:ueub} by $\CO(E+tH)$ for $t=0,\ldots,4$, we see that the bundles
   $\CUb^*(E)$, $\CUb^*(H+E)$, $\CUb^*(2H+E)$, $\CUb^*(3H+E)$, and $\CUb^*(4H+E)$ belong to $\CAt$.
   From Step~3 we conclude that the bundles
   $\CUb(E)$, $\CUb(H+E)$, $\CUb(2H+E)$, $\CUb(3H+E)$, and $\CUb(4H+E)$ belong to $\CAt$.

   \emph{Step 5.} The bundles $\boxed{\CO(2E)$ and $\CO(H+2E)}$ belong to $\CAt$.
   
   Dualizing the short exact sequence~\eqref{eq:uub} and using the Grothendieck
   duality, we obtain a short exact sequence of the form
   \begin{equation}\label{eq:ubu}
      0\to \CUb^*\to \CU^*\to \iot_*\CO\to 0.
   \end{equation}
   Twisting by $\CO(tH)$ and using Step~2 we conclude that $\iot_*\CO(tH)\in\CAt$ for $t=0,\ldots,4$.
   From Lemma~\ref{lm:blmut}
   we see that $\iot_*\CO(tH+2E)\in\CAt$ for $t=0,\ldots,4$.
   Finally, from Step~3 and the short exact sequence
   \begin{equation*}
      0\to \CO(tH+E)\to \CO(tH+2E)\to \iot_*\CO(tH+2E)\to 0
   \end{equation*}
   we conclude that $\CO(2E)$ and $\CO(H+2E)$ belong to $\CAt$.

   \emph{Step 6.} The bundles $\boxed{\CO(H-E)$, $\CO(2H-E)$, $\CO(3H-E)$, $\CO(4H-E)}$ belong to $\CAt$.
   
   From Step~5 we know that $\iot_*\CO(tH)\in\CAt$ for $t=0,\ldots,4$.
   The claim follows from the short exact sequence
   \begin{equation*}
      0\to \CO(tH-E)\to \CO(tH)\to \iot_*\CO(tH)\to 0
   \end{equation*}
  and Step~1.

   \emph{Step 7.} The bundles $\boxed{\CUb(H-E)$, $\CUb(2H-E)$, $\CUb(3H-E)$, $\CUb(4H-E)}$ belong to $\CAt$.
   
   Applying $\Lambda^2$ to the short exact sequence~\eqref{eq:q}, we get
   \begin{equation}\label{eq:q2}
      0\to \CUb^*(E)\to \Lambda^2\CQ\to \Lambda^2\CUb^*\to 0.
   \end{equation}
   Twisting by $\CO(tH)$ and using Step~4, we see that $\Lambda^2\CUb^*(tH)\in\CAt$ for $t=0,\ldots,4$.
   Now just use the isomorphism $\Lambda^2\CUb^*\simeq \CUb(\Hb)=\CUb(H-E)$.

   \emph{Step 8.} The bundles $\boxed{\CU(2E)$ and $\CU(H+2E)}$ belong to $\CAt$.
   
   Applying $\Lambda^2$ to the dual of the short exact sequence~\eqref{eq:uut},
   and using the isomorphism $\Lambda^2\CU^*\simeq \CU(H)$ we get
   \begin{equation}\label{eq:utu2}
      0\to \CU^*(-E)\to \Lambda^2\CUt^*\to \CU(H)\to 0.
   \end{equation}
   From the non-canonical splitting $\CUt^*\simeq\CUb^*\oplus\CO$
   we have $\Lambda^2\CUt^*\simeq \Lambda^2\CUb^*\oplus\CUb^*$.
   Twisting~\eqref{eq:utu2} by $\CO(tH)$ and using Steps~2 and~7, we conclude that
   $\CU^*(tH-E)\in\CAt$ for $t=0,\ldots,3$.
   Thus, twisting~\eqref{eq:ueub} by $\CO(tH)$ and using Steps~1 and~2,
   we see that $\iot_*\CW^*(tH)\in\CAt$ for $t=0,\ldots,3$. It follows from
   Lemma~\ref{lm:blmut} that $\iot_*\CW^*(tH+2E)\in\CAt$ for $t=0,\ldots,3$.

   Finally, consider the short exact sequence
   \begin{equation*}
      0\to \CUb^*(tH+E)\to \CUb^*(tH+2E)\to \iot_*\CUb^*(tH+2E)\to 0.
   \end{equation*}
   From Step~4 and the short exact sequence
   \begin{equation*}
      0\to \iot_*\CO(tH+E)\to \iot_*\CUb^*(tH+2E)\to \iot_*\CW^*(tH+2E)\to 0
   \end{equation*}
   we conclude that $\CUb^*(2E)$ and $\CUb^*(H+2E)$ belong to $\CAt$. It follows from
   Step~5 that the bundles $\CU(2E)$ and $\CU(H+2E)$ belong to $\CAt$.

   \emph{Step 9.} The bundle $\boxed{\CO(3H-2E)}$ belongs to $\CAt$
   
   From Step~8 we know that $\CU^*(3H-E)\in\CAt$. From Steps~6 and 7 we know that
   $\CUt^*(3H-E)\in\CAt$. Thus, twisting the dual of~\eqref{eq:uut} by $\CO(3H-E)$, we get
   \begin{equation*}
     0\to \CO(3H-2E) \to \CUt^*(3H-E)\to \CU^*(3H-E)\to 0, 
   \end{equation*}
   and conclude that $\CO(3H-2E)\in\CAt$.

   \emph{Step 10.} The bundle $\boxed{\CUb(3H-2E)}$ belongs to $\CAt$.
   
   We show that $\CUb(3H-2E)\simeq\CUb(3\Hb+E)$ is in $\CAt$.
   Consider the Koszul complex associated to the nowhere vanishing section $\CO\to \CUt^*(E)$,
   further twisted by $\CO(\Hb+4E)$:
   \begin{equation}
     \label{eq:st10}
     0\to \CO\to \Lambda^3\CUt(\Hb+E)\to \Lambda^2\CUt(\Hb+2E)\to \CUt(\Hb+3E)\to \CO(\Hb+4E)
     \to 0.
   \end{equation}
   Tensoring~\eqref{eq:st10} with $\CUb(3\Hb+E)$, we obtain the complex
   \begin{equation}
     \label{eq:st10a}
     0\to \CUb(3\Hb+E)\to \CUb\otimes\Lambda^3\CUt(4\Hb+2E)\to \CUb\otimes\Lambda^2\CUt(4\Hb+3E) \to
     \CUb\otimes\CUt(4\Hb+4E)\to \CUb(4\Hb+5E) \to 0.
   \end{equation}
   Remark that the bundles $\CUb\otimes\Lambda^3\CUt(4\Hb+2E)$ and $\CUb\otimes\Lambda^2\CUt(4\Hb+3E)$
   belong to the subcategories $\CB_2$ and $\CB_3$ respectively, and from the previous steps
   we know that $\CB_2$ and $\CB_3$ are contained in $\CAt$. Moreover, from Step~4 we know that
   the bundle $\CUb(4\Hb+5E)\simeq \CUb(4H+E)$ is in $\CAt$ as well.
   Thus, if we manage to show that $\CUb\otimes\CUt(4\Hb+4E)$ is in $\CAt$, we will be able
   to deduce, using the sequence~\eqref{eq:st10a}, that $\CUb(3H-2E)$ is in $\CAt$, which
   will finish the proof.
   Choose a splitting $\CUt\simeq\CUb\oplus\CO$. Then
   \begin{equation}
     \label{eq:st10b}
     \CUb\otimes\CUt(4\Hb+4E)\simeq \CUb(4\Hb+4E)\oplus \Lambda^2\CUb(4\Hb+4E)\oplus
     S^2\CUb(4\Hb+4E).
   \end{equation}
   We will show that all the summands in~\eqref{eq:st10b} belong to $\CAt$.
   From Step~2 we know that $\CUb(4\Hb+4E)\simeq \CUb(4H)$ belongs to $\CAt$. From the
   isomorphism $\Lambda^2\CUb(4\Hb+4E)\simeq \CUb^*(3\Hb+4E)\simeq \CUb^*(3H+E)$ and
   Step~4 we see that the second summand in~\eqref{eq:st10b} belongs to $\CAt$ as well.

   Finally, from the short exact sequence
   \begin{equation*}
     0\to \CUb\to \Vb\otimes\CO \to \CUb^*\to 0
   \end{equation*}
   we get the exact sequence
   \begin{equation}
     \label{eq:st10c}
     0\to S^2\CUb\to \Vb\otimes\CUb\to \Lambda^2\Vb\otimes\CO\to \Lambda^2\CUb^*\to 0.
   \end{equation}
   Twisting~\eqref{eq:st10c} by $\CO(4H)$, we get the sequence
   \begin{equation}
     \label{eq:st10d}
     0\to S^2\CUb(4H)\to \Vb\otimes\CUb(4H)\to \Vb\otimes\CO(4H)\to \Lambda^2\CUb^*(4H)\to 0.
   \end{equation}
   From Steps~1 and 4 we know that the second and third terms of the sequence~\eqref{eq:st10d}
   belong to $\CAt$. From Step~7 we know that $\Lambda^2\CUb^*(4H)$ is in $\CAt$ as well.
   Thus, $S^2\CUb(4H)\simeq S^2\CUb(4\Hb+4E)$ belongs to $\CAt$, which finishes the proof.
\end{proof}

Looking at the index and rank of the Grothendieck group, the following seems plausible.

\begin{conjecture}
  The bounded derived category of the odd isotropic Grassmannian $\IGr(n, 2n+1)$
  admits a full rectangular Lefschetz exceptional collection.
\end{conjecture}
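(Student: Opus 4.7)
The plan is to imitate the strategy developed in Section~\ref{sec:37} for $\IGr(3,7)$, scaling each step to arbitrary $n$. First, one needs to guess the correct Lefschetz basis. Propositions~\ref{prop:basis_right}, \ref{prop:basis} and~\ref{prop:37eb} together with Corollary~\ref{cor:l2} suggest taking a basis built from $\{\Lambda^j\CU\}_{0\leq j\leq n-1}$, $\{\Lambda^j\CU^*\}_{1\leq j\leq n-1}$ and a supplementary column of wedge powers $\{\Lambda^j\CQ\}_{2\leq j\leq n-1}$, possibly together with a few tensor combinations needed to match $\mathrm{rk}\,K_0(X)/(n+2)$. Semiorthogonality inside a single block should follow from a direct generalization of Lemma~\ref{lm:s2mut}: replacing the Koszul-type resolution of $S^2\CU$ by its $\Lambda^k$-analogue $0\to\Lambda^k\CU\to\cdots\to\Lambda^2V\otimes\Lambda^{k-2}\CU\to\cdots\to \Lambda^k\CQ\to 0$ and reducing, via the Koszul resolution~\eqref{eq:koszul_schu}, to Borel--Bott--Weil vanishings on $\Gr(n,2n+1)$ that follow from Lemma~\ref{lm:kap-gen}.

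Semiorthogonality between blocks should follow from the vanishing Lemma~\ref{lm:main}, extended to weights arising from tensor products involving $\Lambda^k\CQ$. After applying Pieri's rule and Lemma~\ref{lm:ll2} one is reduced to an inequality on weight differences $\lambda_i-\lambda_{i+1}$ and boundary values $\lambda_1,\lambda_n$; the hypotheses of Lemma~\ref{lm:main} continue to hold provided one translates suitably, and the combinatorial core (Remark~\ref{rm:kap-dual}) remains unchanged. For fullness, the natural path is the blow-up/projective-bundle duality of diagram~\eqref{eq:37diag}, which is valid for all $n$: one side identifies $\Xt=\Bl_Z X$ with $Z=\IGr(n-1,2n)$, and the other side identifies $\Xt=\PP_{\Xb}(\CUt^*)$ with $\Xb=\LGr(n,2n)$. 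Starting from Orlov's blow-up formula, it suffices to show that the subcategory $\CAt=\langle \iot_*\pt^*D^b(Z)(E),\,p^*\CA\rangle$ coincides with $D^b(\Xt)$, and for this one produces a generating collection on $\Xt$ via Orlov's projective bundle formula applied on the $\Xb$-side. This requires a full rectangular Lefschetz exceptional collection on $\LGr(n,2n)$, which is conjecturally available and was used for $n=3$ in the form of Samokhin's collection~\cite{samokhin2001derived}.

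The principal obstacle is the fullness step. Even for $n=3$ it required a ten-step manipulation of the sequences~\eqref{eq:q}, \eqref{eq:uub}, \eqref{eq:ubue}, \eqref{eq:l2uub} and \eqref{eq:uut}, twisted by powers of $\CO(E)$ and mutated through $p^*\CA$ via Lemmas~\ref{lm:mutproj} and~\ref{lm:blmut}. Naive iteration will not suffice: the number of residual terms that have to be individually swept into $\CAt$ grows quadratically in $n$, and the combinatorics of intermediate Koszul resolutions becomes intractable. The correct framework, as hinted at in the proof of Proposition~\ref{prop:37full}, should be that of \emph{relative staircase complexes} (cf.~\cite{fonarevKP}): a single family of equivariant resolutions on $\Xt$, indexed by Young diagrams fitting in an $n\times(n+2)$ rectangle, that uniformly expresses each projective-bundle generator as an iterated extension of objects already known to lie in $\iot_*\pt^*D^b(Z)(E)$ and $p^*\CA$. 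A secondary obstacle is that for $n\geq 4$ the center $Z=\IGr(n-1,2n)$ of the blow-up is an even isotropic Grassmannian whose derived category is not yet known to admit a full exceptional collection, so the argument must either exploit simultaneous progress on $D^b(\IGr(n-1,2n))$ or, more ambitiously, avoid a full collection on $Z$ and rely only on generation of $\iot_*\pt^*D^b(Z)(E)$ by those equivariant bundles that can be explicitly realized inside $\CAt$ via the sequences of Section~3. Formulating and proving the relevant relative staircase complex is, in my view, the one decisive technical step that remains.
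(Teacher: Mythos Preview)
The statement you are attempting is labeled \emph{Conjecture} in the paper, and the paper offers no proof of it whatsoever; the only result proved is the case $n=3$ (Proposition~\ref{prop:37full}). There is therefore nothing in the paper to compare your attempt against. What you have written is not a proof but a research outline, and to your credit you say as much: you explicitly flag that fullness requires relative staircase complexes that do not yet exist in the needed generality, that a full rectangular Lefschetz collection on $\LGr(n,2n)$ is only ``conjecturally available'', and that $D^b(\IGr(n-1,2n))$ is itself not known to carry a full exceptional collection for $n\geq 4$.

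Your outline is a reasonable extrapolation of the paper's method, and the obstacles you name are genuine. But a few further issues are worth noting. First, your proposed basis is underspecified: the rank of $K_0(\IGr(n,2n+1))$ is $(n+2)\binom{2n}{n}/(n+1)$, and for $n\geq 4$ the collection $\{\Lambda^j\CU\}\cup\{\Lambda^j\CU^*\}\cup\{\Lambda^j\CQ\}$ you sketch is too small by a factor growing with $n$; the ``few tensor combinations'' you allude to are in fact the heart of the problem. Second, the vanishing Lemma~\ref{lm:main} controls only weights with $\lambda_i-\lambda_{i+1}\leq 2$, and once higher Schur functors enter the basis this bound is violated, so the semiorthogonality argument does not carry over as stated. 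Third, the fullness argument on the $\Xb$-side needs not merely \emph{some} full collection on $\LGr(n,2n)$ but one compatible with the projective-bundle twists, and even the existence of a full exceptional collection on $\LGr(n,2n)$ was open for general $n$ at the time of the paper. In short: your plan correctly identifies the architecture, but each of the three pillars (basis, vanishing, fullness) has a real gap, which is precisely why the author left this as a conjecture.
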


\bibliographystyle{abbrv}
\bibliography{igr37}

\begin{thebibliography}{10}

\bibitem{beilinson1978coherent}
A.~A. Beilinson.
\newblock Coherent sheaves on ${P}^n$ and problems of linear algebra.
\newblock {\em Functional Analysis and Its Applications}, 12(3):214--216, 1978.

\bibitem{bondal1990representable}
A.~I. Bondal and M.~M. Kapranov.
\newblock Representable functors, {Serre} functors, and mutations.
\newblock {\em Izvestiya: Mathematics}, 35(3):519--541, 1990.

\bibitem{fonarevKP}
A.~V. Fonarev.
\newblock On the {Kuznetsov}-{Polishchuk} conjecture.
\newblock {\em Proceedings of the Steklov Institute of Mathematics},
  290(1):11--25, Aug. 2015.

\bibitem{gonzales2018geometry}
R.~Gonzales, C.~Pech, N.~Perrin, and A.~Samokhin.
\newblock Geometry of horospherical varieties of {Picard} rank one.
\newblock {\em arXiv preprint arXiv:1803.05063}, 2018.

\bibitem{Gorchinskiy20171827}
S.~Gorchinskiy.
\newblock Integral chow motives of threefolds with {K}-motives of unit type.
\newblock {\em Bulletin of the Korean Mathematical Society}, 54(5):1827--1849,
  2017.

\bibitem{kapranov1988derived}
M.~M. Kapranov.
\newblock On the derived categories of coherent sheaves on some homogeneous
  spaces.
\newblock {\em Inventiones mathematicae}, 92(3):479--508, 1988.

\bibitem{kuchle1995fano}
O.~K{\"u}chle.
\newblock On {Fano} 4-folds of index 1 and homogeneous vector bundles over
  {Grassmannians}.
\newblock {\em Mathematische Zeitschrift}, 218(1):563--575, 1995.

\bibitem{kuznetsov2008exceptional}
A.~Kuznetsov.
\newblock Exceptional collections for grassmannians of isotropic lines.
\newblock {\em Proceedings of the London Mathematical Society}, 97(1):155--182,
  2008.

\bibitem{kuznetsov2008lefschetz}
A.~Kuznetsov.
\newblock Lefschetz decompositions and categorical resolutions of
  singularities.
\newblock {\em Selecta Mathematica}, 13(4):661, 2008.

\bibitem{kuznetsov2016kuchle}
A.~Kuznetsov.
\newblock K{\"u}chle fivefolds of type c5.
\newblock {\em Mathematische Zeitschrift}, 284(3-4):1245--1278, 2016.

\bibitem{kuznetsov2016exceptional}
A.~Kuznetsov and A.~Polishchuk.
\newblock Exceptional collections on isotropic {Grassmannians}.
\newblock {\em Journal of the European Mathematical Society}, 18(3):507--574,
  2016.

\bibitem{kuznetsov2015kuchle}
A.~G. Kuznetsov.
\newblock On {K{\"u}chle} varieties with picard number greater than 1.
\newblock {\em Izvestiya: Mathematics}, 79(4):698, 2015.

\bibitem{mihai2007odd}
I.~A. Mihai.
\newblock Odd symplectic flag manifolds.
\newblock {\em Transformation groups}, 12(3):573--599, 2007.

\bibitem{okawa2011semi}
S.~Okawa.
\newblock Semi-orthogonal decomposability of the derived category of a curve.
\newblock {\em Advances in Mathematics}, 228(5):2869--2873, 2011.

\bibitem{orlov2016smooth}
D.~Orlov.
\newblock Smooth and proper noncommutative schemes and gluing of {DG}
  categories.
\newblock {\em Advances in Mathematics}, 302:59--105, 2016.

\bibitem{orlov1992projective}
D.~O. Orlov.
\newblock Projective bundles, monoidal transformations, and derived categories
  of coherent sheaves.
\newblock {\em Izvestiya Rossiiskoi Akademii Nauk. Seriya Matematicheskaya},
  56(4):852--862, 1992.

\bibitem{orlov2015geometric}
D.~O. Orlov.
\newblock Geometric realizations of quiver algebras.
\newblock {\em Proceedings of the Steklov Institute of Mathematics},
  290(1):70--83, 2015.

\bibitem{orlov2016gluing}
D.~O. Orlov.
\newblock Gluing of categories and {Krull}-{Schmidt} partners.
\newblock {\em Russian Mathematical Surveys}, 71(3):594--596, 2016.

\bibitem{pasquier2009some}
B.~Pasquier.
\newblock On some smooth projective two-orbit varieties with {Picard} number 1.
\newblock {\em Mathematische Annalen}, 344(4):963--987, 2009.

\bibitem{pech2013quantum}
C.~Pech.
\newblock Quantum cohomology of the odd symplectic {Grassmannian} of lines.
\newblock {\em Journal of Algebra}, 375:188--215, 2013.

\bibitem{samokhin2001derived}
A.~V. Samokhin.
\newblock The derived category of coherent sheaves on ${LG}_3\mathbf{C}$.
\newblock {\em Uspekhi Matematicheskikh Nauk}, 56(3):177--178, 2001.

\bibitem{weyman}
J.~Weyman.
\newblock {\em Cohomology of vector bundles and syzygies}, volume 149.
\newblock Cambridge University Press, 2003.

\end{thebibliography}

\end{document}